\newtheorem{thm}{\bf{Theorem}}[section]
\newtheorem{lem}[thm]{\bf{Lemma}}
\newtheorem{cor}[thm]{\bf{Corollary}}
\newtheorem{rem}[thm]{\bf{Remark}}
\numberwithin{equation}{section}
\newcommand{\dist}{\operatorname{dist}}
\newcommand{\argmin}{\operatornamewithlimits{argmin}}
\newcommand{\R}{\operatorname{\mathbb{R}}}
\newcommand{\N}{\operatorname{\mathbb{N}}}
\newcommand{\B}{\operatorname{\mathbb{B}}}
\newcommand{\C}{\operatorname{\mathcal{B}}}
\newcommand{\randsphere}{\operatorname{{\tt randsphere}}}
\newcommand{\D}{\operatorname{\mathcal{D}}}
\newcommand{\U}{\operatorname{\mathcal{U}}}
\newcommand{\VU}{\operatorname{\mathcal{VU}}}
\newcommand{\Prox}{\operatorname{Prox}}
\newcommand{\Proj}{\operatorname{Proj}}
\newcommand{\STOPTOL}{\operatorname{s_{\tt tol}}}
\title{Computing proximal points of convex functions with inexact subgradients}
\author{W. Hare\thanks{Mathematics, University of British Columbia, Kelowna, B.C. V1V 1V7, Canada. Research by this author was partially supported by an NSERC Discovery Grant \#355571-2013. warren.hare@ubc.ca.}\and C. Planiden\thanks{Mathematics, University of British Columbia, Kelowna, B.C. V1V 1V7, Canada. Research by this author was supported by UBC UGF and by an NSERC CGS D grant. chayne.planiden@ubc.ca.}}
\date{\today}
\begin{document}

\maketitle\author
\setcounter{page}{1}\pagenumbering{arabic}

\begin{abstract}
Locating proximal points is a component of numerous minimization algorithms. This work focuses on developing a method to find the proximal point of a convex function at a point, given an inexact oracle. Our method assumes that exact function values are at hand, but exact subgradients are either not available or not useful. We use approximate subgradients to build a model of the objective function, and prove that the method converges to the true prox-point within acceptable tolerance. The subgradient $g_k$ used at each step $k$ is such that the distance from $g_k$ to the true subdifferential of the objective function at the current iteration point is bounded by some fixed $\varepsilon>0.$ The algorithm includes a novel tilt-correct step applied to the approximate subgradient.
\end{abstract}

\textbf{AMS Subject Classification:} Primary 49M30, 65K10; Secondary 90C20, 90C56.\\

\textbf{Keywords:} Bundle methods, convex optimization, cutting-plane methods, inexact subgradient, proximal point.

%%%%%
\section{Introduction}\label{sec:intro}
%%%%%

Given a convex function $f : \R^n \mapsto\R,$ the proximal point of $f$ at $z$ with prox-parameter $r > 0$ is defined
    $$\Prox_f^r(z):=\argmin\limits_y\left\{f(y)+\frac{r}{2}\|y-z\|^2\right\}.$$
First arising in the works of Moreau \cite{moreau1963,proximite}, the proximal point operator has emerged as a subproblem in a diverse collection of algorithms. The basic proximal point algorithm sets
    $$x_{k+1} = \Prox_f^r(x_k)$$
and was shown to converge to a minimizer of $f$ by Martinet \cite{martreg}.  It has since been shown to provide favourable convergence properties in a number of situations (see \cite{ AKK91,  Ferris91, compprox, monops} et al.).

The basic proximal point algorithm has inspired a number of practical approaches, each of which evaluates the proximal point operator as a subroutine. For example, proximal gradient methods apply the proximal point operator to a linearization of a function \cite{Tseng10,WangXu13}.  Proximal bundle methods advance this idea by using a bundle of information to create a convex piecewise-linear approximation of the objective function, then apply the proximal point operator on the model function to determine the next iterate \cite{Bonnansetal06,redist,Kiwiel90,LemarSagas97}.  Proximal splitting methods \cite{combettespesquet11}, such as the Douglas-Rachford algorithm \cite{DouglasRachford56}, focus on the minimization of the sum of two functions and proceed by applying the proximal point operator on each function in alternation. Another example is the novel proximal method for composite functions $F(x) = g(f(x))$ \cite{LewisWright08, sagastizabal13}.  Among the most complex methods, the $\VU$-algorithm alternates between a proximal-point step and a `$\U$-Newton' step to achieve superlinear convergence in the minimization of  nonsmooth, convex functions  \cite{vualg}.

Practical implementations of all of the above methods exist and are generally accepted as highly effective. In most implementations of these methods, the basic assumption is that the algorithm has access to an oracle that returns the function value and a subgradient vector at a given point.  This provides a great deal of flexibility, and makes the algorithms suitable for nonsmooth optimization problems. However, in many applications the user has access to an oracle that returns only function values (see for example \cite{conn09,HareNutiniTesfamariam-2013} and the many references therein).  If the objective is smooth, then practitioners can apply gradient approximation techniques \cite{Kelley99}, and rely on classical smooth optimization algorithms.  However, if the objective is nonsmooth, then practitioners generally rely on direct search methods (see \cite[Ch. 7]{conn09}). While direct search methods are robust and proven to converge, they do not exploit any potential structure of the problem, so there is room for improvement and new developments of algorithms applied to nonsmooth functions using oracles that return only function values.

To that end, other papers in this vein present results that have demonstrated the ability to approximate a subgradient using only function evaluations \cite{BKS08,Gup77, HareNutini13, Kiw10}.  This provides opportunity for the development of proximal-based methods for nonsmooth, convex functions. Such methods require the use of a proximal subroutine that relies on inexact subgradient information. In this paper, we develop such a method, prove its convergence, provide stopping criterion analysis, and include numerical testing.  This method can be used as a foundation in proximal-based methods for nonsmooth, convex functions where the oracle returns an exact function value and an inexact subgradient vector.  We present the method in terms of an arbitrary approximate subgradient, noting that any of the methods from \cite{ BKS08, Gup77,HareNutini13,Kiw10}, or any future method of similar style, can provide the approximate subgradient.

\begin{rem}
It should be noted that several methods that use proximal-style subroutines and inexact subgradient vectors have already been developed \cite{deOlivetal14, Hareetal16,Kiwiel95,Shenetal15,ShenXiaPang07,SunSamCan03,hintermuller2001,solodov2007}.  However, in each case the subroutine is embedded within the developed method and only analyzed in light of the developed method.  In this paper, we develop a subroutine that uses exact function values and inexact subgradient vectors to determine the proximal point for a nonsmooth, convex function.  As a stand-alone method, the algorithm developed in this paper can be used as a subroutine in any proximal-style algorithm.  (Some particular goals in this area are outlined in Section \ref{sec:conc}.)  Some more technical differences between the algorithm in this work and the inexact gradient proximal-style subroutines in other works appear in Subsection \ref{ssec:history}.
\end{rem}

The algorithm in this paper is for finite-valued, convex objective functions, and is based on standard cutting-plane methods (see \cite{Bonnansetal06, compprox}). We assume that for a given point $x$, the exact function value $f(x)$ and an approximate subgradient $g^\varepsilon$ such that $\dist(g^\varepsilon, \partial f(x)) < \varepsilon$ are available, where $\partial f(x)$ is the convex subdifferential as defined in \cite[\S 8C]{rockwets}.  Using this information, a piecewise-linear approximation of $f$ is constructed, and a quadratic problem is used to determine the proximal point of the model function -- an {\em approximal point}.  Unlike methods that use exact subgradients, the algorithm includes a subgradient correction term that is required to ensure convergence.  The algorithm is outlined in detail in Section \ref{sec:themethod}.

The prox-parameter $r$ is fixed in this algorithm. Extension to a more dynamic prox-parameter by the method found in \cite{dynamical} should be possible; we discuss this point in the conclusion. Given a stopping tolerance $\STOPTOL$, in Section \ref{sec:converge} we prove that if a stopping condition is met, then a solution has been found that is within $\varepsilon + \STOPTOL/r$ of the true proximal point.  We also prove that any accumulation point of the algorithm lies within $\varepsilon + \STOPTOL/r$ of the true proximal point.

In Section \ref{sec:numerics}, we discuss practical implementation of the developed algorithm.  The algorithm is implemented in MATLAB version 8.4.0.150421 (R2014b) and several variants are numerically tested on a collection of randomly generated functions. Tests show that the algorithm is effective, even when $\varepsilon$ is quite large.

Finally, in Section \ref{sec:conc} we provide some concluding remarks, specifically pointing out some areas that should be examined in future research.

%%%%%
\section{Preliminaries}\label{sec:prelim}
%%%%%

\subsection{Notation}

Throughout, we assume that the objective function $f: \R^n \mapsto\R$ is finite-valued, convex, proper, and lower semicontinuous (\emph{lsc}).

The Euclidean vector norm is denoted $\|\cdot\|.$ With $\delta>0,$ we use $B_\delta(x)$ to denote the open ball centred at $x$ with radius $\delta.$ We denote the gradient of a function $f$ by $\nabla f.$ The (convex) subdifferential of $f$ is denoted $\partial f,$ and a subgradient of $f$ at $x$ is denoted $g\in\partial f(x),$ as discussed in \cite[\S 8.C]{rockwets}.  The distance from a point $x\in\R^n$ to a set $C$ is denoted $d_C(x),$ and the projection of $x$ onto $C$ is denoted $P_Cx$.

Given $K>0$, we say that the function $f$ is \emph{locally $K$-Lipschitz continuous} about $z$ with radius $\sigma > 0$, if
$$\|f(y)-f(x)\|\leq K\|y-x\|\mbox{ for all }x,y\in B_\sigma(z).$$
We say that $f$ is \emph{globally $K$-Lipschitz continuous} if $\sigma$ can be taken to be $\infty$.

\subsection{The Proximal Point}\label{sec:background}

Given a proper, lsc, convex function $f,$ a point $z\in\R^n$ (the \emph{prox-centre}), and a prox-parameter $r > 0,$ we consider the problem of finding the proximal point of $f$ at $z$:
 $$\Prox_f^r(z)=\argmin\limits_y\left\{f(y)+\frac{r}{2}\|y-z\|^2\right\}.$$
As $f$ is convex, this point exists and is unique \cite[Thm 2.26]{rockwets}.  If $f$ is locally $K$-Lipschitz continuous at $z$  with radius $\sigma > 0$, then \cite[Lemma 2]{compprox} implies
$$\|\Prox_f^r(z)-z\|<\frac{2K}{r} ~\mbox{ whenever }~ \frac{2K}{r} < \sigma.$$
The proximal point can be numerically computed via an iterative method. Given an exact oracle, one method for numerically computing a proximal point of $f$ is as follows.  Let $z\in\R^n$ be the prox-centre.  We create an information bundle, $D_k = \{ (x_i, f_i, g_i) ~:~ i \in \C_k\}$, where $x_i$ is a point at which the oracle has been called, $f_i = f(x_i)$ is the function value returned by the oracle, $g_i = g(x_i) \in \partial f(x_i)$ is the subgradient vector returned by the oracle, and $\C_k$ is the bundle index set.  At each iteration $k,$ the piecewise-linear function $\phi_k$ is defined:
$$\phi_k(x):=\max\limits_{i\in\C_k}\left\{f_i+g_i^\top(x-x_i)\right\}.$$
Then the proximal point of $\phi_k$ (the approximal point) is calculated, $x_{k+1} = \Prox_{\phi_k}^r(z)$, and the oracle is called at $x_{k+1}$ to obtain $f_{k+1}$ and $g_{k+1}$.  If $[f_{k+1}-\phi_k(x_{k+1})]/r<\STOPTOL^2$, where $\STOPTOL$ is the stopping tolerance, then the algorithm stops and returns $x_{k+1}$. Otherwise, the element $(x_{k+1},f_{k+1},g_{k+1})$ is inserted into the bundle and the process repeats.  Further information on this approach can be found in \cite[Chapter XI]{lemarechal93} and in \cite{Kiwiel90,Kiwiel95}.

Computing the approximal point is a convex quadratic program, and can therefore be solved efficiently as long as the dimension and the bundle size remain reasonable \cite{coleman1990globally}. In order to keep the bundle size reasonable, various techniques such as bundle cleaning \cite{karas2009bundle} and aggregate gradient cutting planes \cite{mayer1998stochastic} have been advanced. As a result, we have a computationally tractable algorithm that, under mild assumptions, can be proved to converge to the true proximal point.

In this work, we are interested in how this method must be adapted if, instead of returning $g_k \in \partial f(x_k)$, the oracle returns
\begin{equation}\label{gtildeclosetog}
\tilde{g}_k^\varepsilon \in \partial f(x_k)+B_\varepsilon(0).
\end{equation}
We address this issue in the next section.
%%%%%
\section{Replacing Exactness with Approximation}\label{sec:themethod}
%%%%%

%%
\subsection{The approximate model function and approximate subgradient}

We denote the maximum subgradient error by $\varepsilon,$ and use $\tilde{g}_i^\varepsilon$ to represent the inexact subgradient returned by the oracle at point $x_i.$ We use this information to define a new bundle element to update the model function, but first we want to ensure that our model function will not lie above the objective function at the prox-centre. This is a necessary component of our convergence proof. So if the linear function defined by the new bundle element lies above $f$ at $z,$ we make a correction to $\tilde{g}_k^\varepsilon.$ We set $g_k^\varepsilon=\tilde{g}_k^\varepsilon-c_k,$ where the correction term $c_k$ is nonzero if correction is needed, zero otherwise. Then, denoting the bundle index set by $\C_k,$ the piecewise-linear model function $\phi_k^\varepsilon$ is defined:
\begin{equation}\label{phikeps}
\phi_k^\varepsilon(x):=\max\limits_{i\in\C_k}\left\{f_i+g_i^{\varepsilon\top}(x-x_i)\right\}.
\end{equation}
 We use $\D_k$ to denote the set of bundle elements. At initialization ($k=0$), we have $\C_0=\{0\}$ and $\D_0=\{(z,f_0,g_0^\varepsilon)\}.$ For each $k\geq1,$ we will have at least three bundle elements: $\C_k\supseteq\{-1,0,k\}$ and $\D_k\supseteq\{(x_k,\phi_{k-1}^\varepsilon(x_k),r(z-x_k)),(z,f_0,g_0^\varepsilon),(x_k,f_k,g_k^\varepsilon)\}.$ In bundle and cutting-plane methods, the bundle component $r(z-x_k)$ is known as the \emph{aggregate subgradient} \cite{deOlivetal14,Kiwiel95,Shenetal15}, and is an element of $\partial\phi_{k-1}^\varepsilon(x_k).$ In this work, we adopt the convention of using the index $-1$ as the label for the aggregate bundle element $(x_k,\phi_{k-1}^\varepsilon(x_k),r(z-x_k)).$We may have up to $k+2$ elements: $\C_k=\{-1,0,1,2,\ldots,k\},$ however, elements $-1,0$ and $k$ are sufficient to guarantee convergence.\\

Now let us consider the correction term $c_k.$ Suppose that
$$E_k:=f_k+\tilde{g}_k^{\varepsilon\top}(z-x_k)-f(z)>0,$$
thus necessitating a correction. We seek the minimal correction term, hence, we need to find
$$c_k\in\argmin\{\|c\|:c^\top(z-x_k)-E_k=0\}.$$
This gives
\begin{equation}\label{white}
c_k=\Proj_G(0),\mbox{ where }G=\left\{c:\frac{c^\top(z-x_k)}{\|z-x_k\|}=\frac{E_k}{\|z-x_k\|}\right\}.
\end{equation}
That is, $c_k$ is the projection of $0$ onto the hyperplane generated by the normal vector $z-x_k$ and shift constant $E_k.$ This yields
\begin{equation}\label{ck}
c_k=\frac{E_k}{\|z-x_k\|}\frac{z-x_k}{\|z-x_k\|}=E_k\frac{z-x_k}{\|z-x_k\|^2}.
\end{equation}
Now we define the approximate subgradient that we use in the algorithm:
$$g_k^\varepsilon:=\begin{cases}
\tilde{g}_k^\varepsilon-c_k,&\mbox{ if }f_k+\tilde{g}_k^{\varepsilon\top}(z-x_k)>f(z),\\
\tilde{g}_k^\varepsilon,&\mbox{ if }f_k+\tilde{g}_k^{\varepsilon\top}(z-x_k)\leq f(z).
\end{cases}$$
Since $\tilde{g}_k^\varepsilon$ is the approximate subgradient returned by the oracle but $g_k^\varepsilon$ is the one we want to use in construction of the model function, we must first prove that $g_k^\varepsilon$ also respects \eqref{gtildeclosetog}.
\begin{lem}\label{gepsbdd}
Let $f$ be convex. Then at any iteration $k,$ $\dist(g_k^\varepsilon,\partial f(x_k))<\varepsilon.$
\end{lem}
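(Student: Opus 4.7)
The plan is to split into the two cases of the definition of $g_k^\varepsilon$. The case with no correction is immediate, because $g_k^\varepsilon = \tilde g_k^\varepsilon$ and the hypothesis \eqref{gtildeclosetog} already gives $\dist(\tilde g_k^\varepsilon, \partial f(x_k)) < \varepsilon$. So the real work is the correction case $E_k > 0$.

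The key observation I would exploit is the following: by convexity of $f$, every true subgradient $g \in \partial f(x_k)$ satisfies
$$g^\top(z-x_k) \leq f(z) - f_k,$$
so the whole subdifferential $\partial f(x_k)$ lies in the closed halfspace $H^- := \{g \in \R^n : g^\top(z-x_k) \leq f(z) - f_k\}$. On the other hand, the condition $E_k > 0$ says exactly that $\tilde g_k^\varepsilon \notin H^-$. Because of the formula for $c_k$ in \eqref{white}--\eqref{ck}, the vector $g_k^\varepsilon = \tilde g_k^\varepsilon - c_k$ is precisely the projection of $\tilde g_k^\varepsilon$ onto the bounding hyperplane of $H^-$, and since $\tilde g_k^\varepsilon$ lies on the side of $H^-$ opposite to its interior, this projection coincides with $P_{H^-}(\tilde g_k^\varepsilon)$.

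With that reformulation in hand, the rest is the standard nonexpansiveness of projection onto a closed convex set. For any $g \in \partial f(x_k) \subseteq H^-$ I would use $P_{H^-}(g) = g$ together with
$$\|g_k^\varepsilon - g\| = \|P_{H^-}(\tilde g_k^\varepsilon) - P_{H^-}(g)\| \leq \|\tilde g_k^\varepsilon - g\|,$$
and then take the infimum over $g \in \partial f(x_k)$ to conclude
$$\dist(g_k^\varepsilon, \partial f(x_k)) \leq \dist(\tilde g_k^\varepsilon, \partial f(x_k)) < \varepsilon.$$
Combining with the trivial no-correction case finishes the proof.

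The only step that requires a moment of care — and what I would flag as the main obstacle — is verifying that the explicit formula \eqref{ck} really coincides with $P_{H^-}(\tilde g_k^\varepsilon)$ rather than just the projection onto the affine hyperplane $G$ from \eqref{white}. This amounts to checking a sign: since $E_k > 0$ and $c_k$ is a positive multiple of $(z - x_k)$, subtracting $c_k$ from $\tilde g_k^\varepsilon$ decreases the inner product with $(z-x_k)$ by exactly $E_k$, landing on the boundary of $H^-$ from the outside. Once that is noted, the nonexpansiveness argument applies directly and no further estimates are needed.
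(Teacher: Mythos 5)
Your proposal is correct and follows essentially the same route as the paper: both identify the halfspace $\{g : f(z)\geq f(x_k)+g^\top(z-x_k)\}$ containing $\partial f(x_k)$ by convexity, observe that the tilt-corrected vector is the projection of $\tilde g_k^\varepsilon$ onto that halfspace (coinciding with the projection onto its bounding hyperplane since $E_k>0$ places $\tilde g_k^\varepsilon$ outside), and conclude by nonexpansiveness of the projection. The only cosmetic difference is that the paper fixes a particular $\bar g\in\partial f(x_k)$ with $\|\tilde g_k^\varepsilon-\bar g\|<\varepsilon$ rather than taking an infimum, which is immaterial.
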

\begin{proof}  If $g_k^\varepsilon = \tilde{g}_k^\varepsilon$, then the result holds by Assumption \ref{gtildeclosetog}.  Suppose $f_k+\tilde{g}_k^{\varepsilon\top}(z-x_k)>f(z)$.  Define
\begin{align*}
H:=\left\{g:f(z)=f(x_k)+g^\top(z-x_k)\right\},\\
J:=\left\{g:f(z)\geq f(x_k)+g^\top(z-x_k)\right\}.
\end{align*}
By equation \eqref{white}, we have that $g_k^\varepsilon=P_H(\tilde{g}_k^\varepsilon).$ Since $f(z) < f_k+\tilde{g}_k^{\varepsilon\top}(z-x_k)$, we also know that $P_J(\tilde{g}_k^\varepsilon) = P_H(\tilde{g}_k^\varepsilon) = g_k^\varepsilon$.  By equation \eqref{gtildeclosetog}, there exists $\bar{g}\in\partial f(x_k)$ such that $\|\tilde{g}_k^\varepsilon-\bar{g}\|<\varepsilon.$   Since $f$ is convex, we have
$$f(z)\geq f(x_k)+\bar{g}^\top(z-x_k),$$
hence $\bar{g} \in J$ and $P_J(\bar{g}) = \bar{g}$.  Using the fact that the projection is firmly nonexpansive \cite[Proposition 4.8]{convmono}, we have
$$\|g_k^\varepsilon -  \bar{g}\| = \|P_J(\tilde{g}_k^\varepsilon)-P_J(\bar{g})\|\leq\|\tilde{g}_k^\varepsilon-\bar{g}\|<\varepsilon,$$
which is the desired result.
\end{proof}
In the case of exact subgradients, the resulting linear functions form cutting planes of $f,$ so that the model function is an underestimator of the objective function. That is, for exact subgradient $g_i,$
\begin{equation}\label{fbigger}
f(x)\geq f_i+g_i^\top(x-x_i)\mbox{ for all }i\in\C_k.
\end{equation}
In the approximate subgradient case we do not have this luxury, but all is not lost. Using inequality \eqref{fbigger} and the fact that $\|g_i^\varepsilon-g_i\|<\varepsilon,$ we have that for all $i\in\C_k$ and for all $x\in\R^n,$
\begin{align*}
f(x)&\geq f_i+(g_i-g_i^\varepsilon+g_i^\varepsilon)^\top(x-x_i)\\
&=f_i+g_i^{\varepsilon\top}(x-x_i)+(g_i-g_i^\varepsilon)^\top(x-x_i)\\
&\geq f_i+g_i^{\varepsilon\top}(x-x_i)-\|g_i-g_i^\varepsilon\|\|x-x_i\|\\
&\geq f_i+g_i^{\varepsilon\top}(x-x_i)-\varepsilon\|x-x_i\|.
\end{align*}
Hence,
\begin{equation}\label{epsfofz}
f(x)+\varepsilon\|x-x_i\|\geq\phi_k^\varepsilon(x)\mbox{ for all }x\in\R^n,\mbox{ for all }i\in\C_k,\mbox{ for all }k\in\N.
\end{equation}

%%%%%
\subsection{The algorithm}
%%%%%
Now we present the algorithm that uses approximate subgradients. In Section \ref{sec:numerics} we implement four variants of this algorithm, comparing four different ways of updating the bundle in Step 5.

\hbox{}
\textbf{\large Algorithm:\normalsize}
\begin{itemize}
\item[Step 0:]\emph{Initialization.} Given a prox-centre $z\in\R^n,$ choose a stopping tolerance $\STOPTOL\geq0$ and a prox-parameter $r>0.$ Set $k=0$
and $x_0=z.$ Set $\C_0=\{0\}.$ Use the oracle to find $f_0,\tilde{g}_0^\varepsilon.$
\item[Step 1:]\emph{Linearization.} Compute $E_k=f_k+\tilde{g}_k^{\varepsilon\top}(z-x_k)-f(z),$ and define
$$g_k^\varepsilon:=\tilde{g}_k^\varepsilon+\max\{0,E_k\}\frac{z-x_k}{\|z-x_k\|^2}.$$
\item[Step 2:]\emph{Model.} Define
$$\phi_k^\varepsilon(x):=\max\limits_{i\in\C_k}\left\{f_i+g_i^{\varepsilon\top}(x-x_i)\right\}.$$
\item[Step 3:]\emph{Proximal Point.} Calculate the point $x_{k+1}=\Prox_{\phi_k^\varepsilon}^r(z),$ and use the oracle to find $f_{k+1},\tilde{g}_{k+1}^\varepsilon.$
\item[Step 4:]\emph{Stopping Test.} If $\frac{f_{k+1}-\phi_k^\varepsilon(x_{k+1})}{r}\leq\STOPTOL^2,$ output the approximal point $x_{k+1}$ and stop.
\item[Step 5:]\emph{Update and Loop.} Create the aggregate bundle element $(x_k,\phi_{k-1}^\varepsilon,r(z-x_k)).$ Create $\C_{k+1}$ such that $\{-1,0,k\}\subseteq B_{k+1}\subseteq\{-1,0,1,2,\cdots,k\}.$ Increment $k$ and go to Step 1.
\end{itemize}

%%%%%
\subsection{Relation to other inexact gradient proximal-style subroutines}\label{ssec:history}
%%%%%

Now that the algorithm is presented, we provide some insight on how it relates to previously developed inexact subgradient proximal point computations.  First and foremost, the presented algorithm is a stand-alone method that is not presented as a subroutine of another algorithm. To our knowledge, all of the other methods of computing proximal points that use inexact subgradients are subroutines found within other algorithms.
\par In 1995, \cite{Kiwiel95} presented a method of computing a proximal point using inexact subgradients as a subroutine in a minimization algorithm for a nonsmooth, convex objective function. However, the algorithm assumes that the inexactness in the subgradient takes the form of an $\varepsilon$-subgradient. An $\varepsilon$-subgradient $v^\varepsilon$ of $f$ at $x_k$ is an approximate subgradient that satisfies
$$f(x)\geq f(x_k)+v^{\varepsilon\top}(x-x_k)-\varepsilon\mbox{ for all }x.$$
Thus, the method in \cite{Kiwiel95} relies on the approximate subgradient forming an $\varepsilon$-cutting plane in each iteration. 

While $\varepsilon$-subgradients do appear in some real-world applications \cite{fisher1985,kalvelagen2002}, in other situations the ability to determine $\varepsilon$-subgradients is an unreasonable expectation.  For example, if the objective function is a black-box that only returns function values, then subgradients could be approximated numerically using the techniques developed in \cite{BKS08,Gup77, HareNutini13, Kiw10}.  These technique will return approximate subgradients that satisfy assumption \eqref{gtildeclosetog}, but are not necessarily $\varepsilon$-subgradients.  The method of the present work changes the need for $\varepsilon$-subgradients, to  approximate subgradients that satisfy assumption \eqref{gtildeclosetog}.

Shortly before \cite{Kiwiel95} was published, a similar technique was presented in \cite{correa1993}. This version does not require the inf-compactness condition that we do, nor does it impose the existence of a minimum function value. However, it too requires the approximate subgradients to be $\varepsilon$-subgradients. A few years later \cite{ShenXiaPang07} and \cite{SunSamCan03} presented similar solvers, also as subroutines within minimization algorithms. Again, the convergence results rest upon $\varepsilon$-subgradients and model functions that are constructed using supporting hyperplanes of the objective function.
\par The algorithmic pattern in \cite{deOlivetal14} is much more general in nature; it is applicable to many types of bundle methods and oracles. In \cite{deOlivetal14}, the authors go into detail about the variety of oracles in use (upper, dumb lower, controllable lower, asymptotically exact and others), and the resulting particular bundle methods that they inspire. The oracles themselves are more generalized as well, in that they deliver approximate function values instead of exact ones. The approximate subgradient is then determined based on the approximate function value, and thus is dependent on two parameters of inexactness rather than one. The algorithm iteratively calculates proximal points as does ours, but does not include the subgradient correction step.
\par Both \cite{Shenetal15} and \cite{Hareetal16} address the issue of non-convexity. The algorithm in \cite{Shenetal15} splits the prox-parameter in two: a local convexification parameter and a new model prox-parameter. It calls an oracle that delivers an approximate function value and approximate subgradient, which are used to construct a piecewise-linear model function. That function is then shifted down to ensure that it is a cutting-planes model. In \cite{Hareetal16} the same parameter-splitting technique is employed to deal with nonconvex functions, and the oracle returns both inexact function values and inexact subgradients. The notable difference here is that the approximate subgradient is not an $\varepsilon$-subgradient; it is any vector that is within $\varepsilon$ of the subdifferential of the model function at the current point. This is the same assumption that we employ in our version. However, non-convexity forces the algorithms to involve prox-parameter corrections that obscure any proximal point subroutine. (Indeed, it is unclear if a proximal point is actually computed as a subroutine in these methods, or if the methods only use proximal directions to seek descent.)
\par In all of the above methods except for the last, the model function is built using lower-estimating cutting planes. In this work, the goal is to avoid this requirement and extend the class of inexact subgradients that can be used in these types of algorithms. The tilt-correct step in our method ensures that the model function and the objective function coincide at the prox-centre, which we show is sufficient to prove convergence in the next section. Although the last method mentioned above is for the nonconvex case and uses approximate function values, it is the most similar to the one in the present work, as it does not rely on $\varepsilon$-subgradients. The differentiating aspect  in that method, as in all of the aforementioned methods, is that it does not make any slope-improving correction to the subgradient.

%%%%%
\section{Convergence}\label{sec:converge}
%%%%%

To prove convergence of this routine, we need several lemmas that are proved in the sequel. Ultimately, we prove that the algorithm converges to the true proximal point of $f$ with a maximum error of $\frac{\varepsilon}{r}.$ Throughout this section, we denote $\Prox_f^r(z)$ by $x^*.$ To begin, we establish some properties of $\phi^\varepsilon_k.$
\begin{lem}\label{abcd}
Let $\phi_k^\varepsilon(x)=\max\limits_{i\in\C_k}\left\{f_i+g_i^{\varepsilon\top}(x-x_i)\right\}.$ Then for all $k,$
\begin{itemize}
\item[a)] $\phi_k^\varepsilon$ is a convex function,
\item[b)] $\phi_k^\varepsilon(z)=f(z),$
\item[c)] $\phi_{k+1}^\varepsilon(x)\geq\phi_k^\varepsilon(x_{k+1})+r(z-x_{k+1})^\top(x-x_{k+1})$ for all $x\in\R^n,$
\item[d)] $\phi_k^\varepsilon(x)\geq f(x_k)+g_k^{\varepsilon\top}(x-x_k)$ for all $x\in\R^n,$ and
\item[e)] $\phi_k^\varepsilon$ is $(K+\varepsilon)$-Lipschitz.
\end{itemize}
\end{lem}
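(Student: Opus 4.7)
The plan is to dispatch the five parts of the lemma by recognizing that (a), (c), and (d) fall out almost immediately from the definition of $\phi_k^\varepsilon$ as a pointwise maximum over the bundle, while (b) and (e) will require an induction on $k$ that leans on the tilt-correction design and on the proximal first-order optimality condition.

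First I would handle (a) in one line: $\phi_k^\varepsilon$ is a finite pointwise max of affine functions, hence convex. Next, for (c), I would note that by the update rule in Step 5, the index $-1$ element inserted into the bundle at iteration $k+1$ is precisely $(x_{k+1},\phi_k^\varepsilon(x_{k+1}),r(z-x_{k+1}))$; since this element participates in the max defining $\phi_{k+1}^\varepsilon$, the inequality in (c) is immediate. For (d), I would observe that index $k$ is always in $\C_k$, so $(x_k, f_k, g_k^\varepsilon)$ contributes $f_k + g_k^{\varepsilon\top}(x-x_k)$ as one of the affine pieces, giving the claim.

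Part (b) is the main obstacle and I would prove it by induction on $k$. The base case $k=0$ is trivial because $x_0=z$ forces $E_0=0$, no correction is applied, and $\phi_0^\varepsilon(z)=f_0=f(z)$. For the inductive step, I would check that every active bundle element evaluated at $z$ is bounded above by $f(z)$, while the index $0$ element attains $f(z)$. For a non-aggregate element $(x_i,f_i,g_i^\varepsilon)$ with $i\geq 1$, equations \eqref{white}--\eqref{ck} were constructed precisely so that $f_i+g_i^{\varepsilon\top}(z-x_i)=f(z)$ when $E_i>0$, and the raw definition of $E_i\leq 0$ handles the uncorrected case. The delicate part is the aggregate element $(x_k,\phi_{k-1}^\varepsilon(x_k),r(z-x_k))$: the first-order optimality condition for $x_k=\Prox_{\phi_{k-1}^\varepsilon}^r(z)$ yields $r(z-x_k)\in\partial\phi_{k-1}^\varepsilon(x_k)$, and combining the subgradient inequality at $z$ with the inductive hypothesis $\phi_{k-1}^\varepsilon(z)=f(z)$ gives
\[
f(z)=\phi_{k-1}^\varepsilon(z)\geq \phi_{k-1}^\varepsilon(x_k)+r(z-x_k)^\top(z-x_k)=\phi_{k-1}^\varepsilon(x_k)+r\|z-x_k\|^2,
\]
which is exactly the value of the aggregate element at $z$. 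Together, these bounds show $\phi_k^\varepsilon(z)\leq f(z)$, and the index $0$ element witnesses equality.

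Finally, for (e), I would use the standard fact that the max of a family of $L$-Lipschitz functions is $L$-Lipschitz, and bound each affine piece's slope by $K+\varepsilon$. For a non-aggregate piece, Lemma \ref{gepsbdd} combined with global $K$-Lipschitzness of $f$ (which gives $\|g\|\leq K$ for every $g\in\partial f(x_i)$) yields $\|g_i^\varepsilon\|\leq K+\varepsilon$. For the aggregate slope $r(z-x_k)$, I would invoke the inductive hypothesis that $\phi_{k-1}^\varepsilon$ is $(K+\varepsilon)$-Lipschitz, together with $r(z-x_k)\in\partial\phi_{k-1}^\varepsilon(x_k)$, to conclude $\|r(z-x_k)\|\leq K+\varepsilon$. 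So the induction closes and the bound on the Lipschitz constant propagates to $\phi_k^\varepsilon$. The real work is in (b); once that is in hand, (e) is a short induction riding on the same optimality identity, and the remaining parts are essentially tautological.
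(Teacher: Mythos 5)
Your proposal is correct and follows essentially the same route as the paper: (a), (c), (d) read off directly from the pointwise-max structure and the presence of the aggregate and index-$k$ elements, (b) combines the tilt-correction design for the non-aggregate pieces with the subgradient inequality for the aggregate slope $r(z-x_k)\in\partial\phi_{k-1}^\varepsilon(x_k)$ and the fact that the index-$0$ piece attains $f(z)$, and (e) bounds the slopes of the affine pieces by $K+\varepsilon$. If anything, your treatment is slightly more complete than the paper's: you make the induction in (b) explicit, and in (e) you explicitly bound the aggregate slope $\|r(z-x_k)\|$ via the inductive Lipschitz bound on $\phi_{k-1}^\varepsilon$, a detail the paper's one-line argument passes over.
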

\begin{proof}
\begin{itemize}
\item[a)] Since $\phi_k^\varepsilon$ is the maximum of a finite number of convex functions, $\phi_k^\varepsilon$ is convex by \cite[Proposition 8.14]{convmono}.
\item[b)] We have that $\phi_0^\varepsilon(z)=f(z)$ by definition. Then for any $k>0,$ the tilt-correct step guarantees that
$$f(x_k)+g_k^{\varepsilon\top}(z-x_k)\leq f(z),$$
so by equation \eqref{phikeps} we have $\phi_k^\varepsilon(z)\leq f(z)$ for all $k.$ Thus, we need only concern ourselves with the new linear function at iteration $k.$ For $k>0,$ either $f_k+\tilde{g}_k^{\varepsilon\top}(z-x_k)>f(z)$ or $f_k+\tilde{g}_k^{\varepsilon\top}(z-x_k)\leq f(z).$ In the former case, we make the correction to $\tilde{g}_k^\varepsilon$ so that $f_k+g_k^{\varepsilon\top}(z-x_k)=f(z).$ As for the aggregate subgradient bundle element, we have that $r(z-x_k)\in\partial\phi_k^\varepsilon(x_{k+1})$ and $\phi_k^\varepsilon$ is convex, so that $\phi_{k-1}^\varepsilon(x_k)+r(z-x_k)(x-x_k)\leq\phi_k^\varepsilon(x)$ for all $x\in\R^n.$ In particular, $\phi_{k-1}^\varepsilon(x_k)+r(z-x_k)(x-x_k)\leq\phi_k^\varepsilon(z)=f(z).$ Therefore,
$$f(z)=\phi_0^\varepsilon(z)\leq\phi_k^\varepsilon(z)=\max\limits_{i\in\C_k}\left\{f_i+g_i^{\varepsilon\top}(z-x_i)\right\}\leq f(z),$$
which proves (b).
\item[c)] Since $(x_{k+1},\phi_k^\varepsilon(x_{k+1}),r(z-x_{k+1}))\in\D_{k+1},$ we have
\begin{align*}
\phi_{k+1}^\varepsilon(x)&=\max\limits_{i\in\B_{k+1}}\left\{f_i+g_i^{\varepsilon\top}(x-x_i)\right\}\\
&\geq\phi_k^\varepsilon(x_{k+1})+r(z-x_{k+1})^\top(x-x_{k+1}).
\end{align*}
\item[d)] This is true by definition of $\phi_k^\varepsilon.$
\item[e)] We know that $f$ is locally $K$-Lipschitz, and by Lemma \ref{gepsbdd}, for each $k$ we have that $g_k^\varepsilon$ is within distance $\varepsilon$ of $\partial f(x_k).$ Therefore, $\phi_k^\varepsilon$ is (globally) $(K+\varepsilon)$-Lipschitz.
\end{itemize}
\end{proof}

\begin{rem}Lemma \ref{abcd} makes strong use of the aggregate bundle element to prove part (c).  It is possible to avoid the aggregate bundle element by setting $B_{k+1} = B_k \cup \{k\}$ at every iteration.  To see this, note that the tilt-correct step will only ever alter  $g_i^{\varepsilon}$ at iteration $i$.  As $\phi_k^\varepsilon(x) \geq \phi_k^\varepsilon(x_{k+1})+r(z-x_{k+1})^\top(x-x_{k+1})$, if $B_{k+1} = B_k \cup \{k\}$, then $\phi_{k+1}^\varepsilon(x) \geq \phi_k^\varepsilon(x)$ provides the necessary information to ensure that Lemma \ref{abcd}(c) still holds.\end{rem}
Next, we show that at every iteration the distance between the approximal point and the true proximal point is bounded by a function of the distance between $f(x_{k+1})$ and $\phi_k^\varepsilon(x_{k+1}).$ This immediately leads to an understanding of the stopping criterion.
\begin{lem}\label{lembundle2}
At every iteration of the algorithm, the distance between the proximal point of the piecewise-linear function, $x_{k+1}=\Prox_{\phi_k^\varepsilon}^r(z),$ and the proximal point of the objective function, $x^*=\Prox_f^{r}(z),$ satisfies
\begin{equation}\label{diste2r}
\dist(x^*,x_{k+1})\leq\sqrt{\frac{f(x_{k+1})-\phi_k^\varepsilon(x_{k+1})+\frac{\varepsilon^2}{4r}}{r}}+\frac{\varepsilon}{2r}.
\end{equation}
\end{lem}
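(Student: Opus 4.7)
Set $d := \|x^* - x_{k+1}\|$ and $A := f(x_{k+1}) - \phi_k^\varepsilon(x_{k+1})$. The plan is to reduce the claim to the quadratic inequality $rd^2 \leq A + \varepsilon d$. Once this is established, the stated bound is immediate: rearranging to $rd^2 - \varepsilon d - A \leq 0$ and applying the quadratic formula gives
\[
d \leq \frac{\varepsilon + \sqrt{\varepsilon^2 + 4rA}}{2r} = \frac{\varepsilon}{2r} + \sqrt{\frac{A}{r} + \frac{\varepsilon^2}{4r^2}},
\]
which matches the statement.

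The first step is to extract the quadratic via strong convexity applied to both proximal subproblems. The perturbed objectives $F(y) := f(y) + \frac{r}{2}\|y - z\|^2$ and $\Phi_k(y) := \phi_k^\varepsilon(y) + \frac{r}{2}\|y - z\|^2$ are each $r$-strongly convex (using Lemma \ref{abcd}(a) for $\Phi_k$), with unique minimizers $x^*$ and $x_{k+1}$, respectively. Applying the descent inequality $h(y) - h(y_{\min}) \geq \frac{r}{2}\|y - y_{\min}\|^2$ once to $F$ at $y = x_{k+1}$ and once to $\Phi_k$ at $y = x^*$ and summing, the quadratic regularizers $\frac{r}{2}\|y - z\|^2$ cancel, yielding $rd^2 \leq A + B$ with $B := \phi_k^\varepsilon(x^*) - f(x^*)$.

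The remaining, and main, obstacle is to prove $B \leq \varepsilon d$. A direct application of \eqref{epsfofz} at $x^*$ only gives $B \leq \varepsilon\|x^* - x_{j^*}\|$ for whichever bundle index $j^*$ realizes the max in $\phi_k^\varepsilon(x^*)$, and $\|x^* - x_{j^*}\|$ is not obviously bounded by $d$. My approach is to exploit the optimality condition $r(z - x_{k+1}) \in \partial\phi_k^\varepsilon(x_{k+1})$: write this subgradient as a convex combination $\sum_i \lambda_i g_i^\varepsilon$ of active approximate subgradients, each within $\varepsilon$ of some $g_i \in \partial f(x_i)$ by Lemma \ref{gepsbdd}. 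Summing the convexity inequalities $f(x^*) \geq f_i + g_i^\top(x^* - x_i)$ with weights $\lambda_i$ and using the active-set identity $f_i + g_i^{\varepsilon\top}(x_{k+1} - x_i) = \phi_k^\varepsilon(x_{k+1})$ on each active $i$, the error should collapse into a single inner product $\bigl(\sum_i \lambda_i(g_i - g_i^\varepsilon)\bigr)^\top(x^* - x_{k+1})$, which Cauchy-Schwarz bounds by $\varepsilon d$. Combined with the support inequality $\phi_k^\varepsilon(x^*) \geq \phi_k^\varepsilon(x_{k+1}) + r(z - x_{k+1})^\top(x^* - x_{k+1})$ from Lemma \ref{abcd}(c), this should deliver $B \leq \varepsilon d$. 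The delicate point is ensuring the bookkeeping does not leave a residual of the form $\|x_{k+1} - x_i\|$; the active-set identity is precisely what forces the cross-terms to cancel and produces the clean $\varepsilon d$ estimate.
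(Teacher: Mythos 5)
Your skeleton is the same as the paper's: both arguments reduce the claim to the quadratic inequality $r\,d^2\le A+\varepsilon d$ with $d=\|x^*-x_{k+1}\|$ and $A=f(x_{k+1})-\phi_k^\varepsilon(x_{k+1})$, and then complete the square. Your strong-convexity derivation of $r\,d^2\le A+B$, with $B=\phi_k^\varepsilon(x^*)-f(x^*)$, is correct and is essentially the paper's step of adding \eqref{add1} and \eqref{add2}, which uses the subgradient inequalities coming from $r(z-x^*)\in\partial f(x^*)$ and $r(z-x_{k+1})\in\partial\phi_k^\varepsilon(x_{k+1})$; the bookkeeping is identical. You have also correctly located the crux: one must bound the model's overshoot at $x^*$ by $\varepsilon d$, not by $\varepsilon\|x^*-x_i\|$ for bundle points $x_i$ that may be far from $x^*$.

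The gap is in your resolution of that crux. Writing $r(z-x_{k+1})=\sum_i\lambda_i g_i^\varepsilon$ over active indices and summing $f(x^*)\ge f_i+g_i^\top(x^*-x_i)$ with weights $\lambda_i$, the active-set identity $f_i+g_i^{\varepsilon\top}(x_{k+1}-x_i)=\phi_k^\varepsilon(x_{k+1})$ re-anchors only the $g_i^\varepsilon$-part of each linearization at $x_{k+1}$; the perturbation term keeps its anchor at $x_i$. The error you must control is $\sum_i\lambda_i(g_i-g_i^\varepsilon)^\top(x^*-x_i)$, and
\[
\sum_i\lambda_i(g_i-g_i^\varepsilon)^\top(x^*-x_i)=\Bigl(\sum_i\lambda_i(g_i-g_i^\varepsilon)\Bigr)^{\!\top}(x^*-x_{k+1})+\sum_i\lambda_i(g_i-g_i^\varepsilon)^\top(x_{k+1}-x_i);
\]
the second sum is exactly the residual you hoped would cancel, and it does not. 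Cauchy--Schwarz therefore yields only $\varepsilon\sum_i\lambda_i\|x^*-x_i\|$, not $\varepsilon d$. There is also a direction slip at the end: the support inequality $\phi_k^\varepsilon(x^*)\ge\bar\ell(x^*)$ for the aggregate linearization $\bar\ell(x):=\phi_k^\varepsilon(x_{k+1})+r(z-x_{k+1})^\top(x-x_{k+1})$ cannot convert an upper bound on $\bar\ell(x^*)$ into an upper bound on $\phi_k^\varepsilon(x^*)$; this part is repairable, since bounding $\bar\ell(x^*)-f(x^*)$ by $\varepsilon d$ is all that is needed (that is precisely \eqref{add1}), but the cancellation feeding it still fails. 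The paper instead obtains inequality \eqref{tiger} by applying \eqref{epsfofz} with the single index $i=k+1$ to $\phi_{k+1}^\varepsilon$ and chaining with Lemma \ref{abcd}(c), leaning on the fact that both the newest bundle element and the aggregate element of $\C_{k+1}$ are anchored at $x_{k+1}$; note that the subtlety you flagged is genuine, since \eqref{epsfofz} as derived controls each individual linearization by $f(x)+\varepsilon\|x-x_i\|$ with its own anchor $x_i$. As written, your argument establishes the lemma only with $\varepsilon\max_i\|x^*-x_i\|$ in place of $\varepsilon\|x^*-x_{k+1}\|$.
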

\begin{proof}
Since $x_{k+1}=\Prox_{\phi_k^\varepsilon}^r(z),$ we have that
$$\phi_k^\varepsilon(x_{k+1})\leq\phi_k^\varepsilon(x)+\frac{r}{2}\|z-x\|^2\mbox{ for all }x\in\R^n.$$
By equation \eqref{epsfofz}, for $i=k+1\in\C_{k+1}$ we have
$$f(x)+\varepsilon\|x-x_{k+1}\|\geq\phi_{k+1}^\varepsilon(x)\mbox{ for all }x\in\R^n,$$
which, by Lemma \ref{abcd} (c), results in
\begin{equation}\label{tiger}
f(x)+\varepsilon\|x-x_{k+1}\|\geq\phi_k^\varepsilon(x_{k+1})+r(z-x_{k+1})^\top(x-x_{k+1})\mbox{ for all }x\in\R^n.
\end{equation}
In particular, we have
\begin{equation}\label{add1}
f(x^*)+\varepsilon\|x^*-x_{k+1}\|\geq\phi_k^\varepsilon(x_{k+1})+r(z-x_{k+1})^\top(x^*-x_{k+1}).
\end{equation}
Since $x^*=\Prox_f^{r}(z),$ we have $r(z-x^*)\in\partial f(x^*).$ Then
\begin{equation}\label{eq:xstarprox}
f(x)\geq f(x^*)+r(z-x^*)^\top(x-x^*)\mbox{ for all }x\in\R^n,
\end{equation}
thus, we have
\begin{equation}\label{add2}
f(x_{k+1})\geq f(x^*)+r(x^*-z)^\top(x^*-x_{k+1}).
\end{equation}
Adding equations \eqref{add1} and \eqref{add2} yields
\begin{align*}
f(x_{k+1})+\varepsilon\|x^*-x_{k+1}\|&\geq\phi_k^\varepsilon(x_{k+1})+r(z-x_{k+1}-z+x^*)^\top(x^*-x_{k+1}),\\
f(x_{k+1})-\phi_k^\varepsilon(x_{k+1})&\geq r\|x^*-x_{k+1}\|^2-\varepsilon\|x^*-x_{k+1}\|,\\
&=r\left[\|x^*-x_{k+1}\|^2-\frac{\varepsilon}{r}\|x^*-x_{k+1}\|+\frac{\varepsilon^2}{4r^2}-\frac{\varepsilon^2}{4r^2}\right],\\
&=r\left[\|x^*-x_{k+1}\|-\frac{\varepsilon}{2r}\right]^2-\frac{\varepsilon^2}{4r}.
\end{align*}
Isolating the squared binomial term above and taking the square root of both sides, we have
\begin{align*}
\sqrt{\frac{f(x_{k+1})-\phi_k^\varepsilon(x_{k+1})+\frac{\varepsilon^2}{4r}}{r}}&\geq\left|\|x^*-x_{k+1}\|-\frac{\varepsilon}{2r}\right|\geq\|x^*-x_{k+1}\|-\frac{\varepsilon}{2r},\\
\dist(x^*,x_{k+1})&\leq\sqrt{\frac{f(x_{k+1})-\phi_k^\varepsilon(x_{k+1})+\frac{\varepsilon^2}{4r}}{r}}+\frac{\varepsilon}{2r}.
\end{align*}
\end{proof}

\begin{rem}\label{rem:answertorefQ1}
Lemma \ref{lembundle2} not only sets up our analysis of the stopping criterion, but also provides the necessary insight to understand the algorithm's output if an early termination is invoked.  In particular, if the algorithm is used as a subroutine inside of larger method and the larger method stops the subroutine (perhaps because desirable decrease is detected), then equation \eqref{diste2r} still applies.  As such, the optimizer can still compute an error bound on the distance of the output to the true proximal point.
\end{rem}

\begin{cor}\label{cor:tol}
If the stopping criterion is satisfied, then $\dist(x^*,x_{k+1})\leq\STOPTOL+\frac{\varepsilon}{r}.$
\end{cor}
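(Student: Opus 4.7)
The plan is to substitute the stopping criterion directly into the distance bound from Lemma \ref{lembundle2} and then simplify using an elementary square-root inequality. Since the stopping criterion reads $\frac{f_{k+1}-\phi_k^\varepsilon(x_{k+1})}{r}\leq \STOPTOL^2$, I would first rewrite this as $f(x_{k+1}) - \phi_k^\varepsilon(x_{k+1}) \leq r\,\STOPTOL^2$ (recalling that $f_{k+1}=f(x_{k+1})$), and then plug this bound into the right-hand side of inequality \eqref{diste2r}.

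After substitution, the term inside the square root becomes $\frac{r\,\STOPTOL^2 + \varepsilon^2/(4r)}{r} = \STOPTOL^2 + \frac{\varepsilon^2}{4r^2}$, so the bound simplifies to
$$\dist(x^*, x_{k+1}) \leq \sqrt{\STOPTOL^2 + \frac{\varepsilon^2}{4r^2}} + \frac{\varepsilon}{2r}.$$
The key step is then invoking the elementary subadditivity inequality $\sqrt{a^2+b^2}\leq a+b$ for nonnegative reals $a,b$, applied with $a=\STOPTOL$ and $b=\varepsilon/(2r)$. This yields $\sqrt{\STOPTOL^2+\varepsilon^2/(4r^2)}\leq \STOPTOL+\varepsilon/(2r)$, and combining with the leftover $\varepsilon/(2r)$ term gives exactly $\STOPTOL+\varepsilon/r$, as required.

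There is no real obstacle here: Lemma \ref{lembundle2} has done all the work, and the corollary is a short algebraic consequence. The only point that warrants any care is justifying the inequality $\sqrt{a^2+b^2}\leq a+b$, which follows immediately by squaring both sides (or by monotonicity of $\sqrt{\cdot}$ applied to $a^2+b^2 \leq (a+b)^2 = a^2+2ab+b^2$ for $a,b\geq 0$). I would include a one-line remark to that effect for completeness, and that should conclude the proof cleanly.
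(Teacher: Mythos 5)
Your proposal is correct and follows exactly the paper's own argument: substitute the stopping criterion into inequality \eqref{diste2r} and bound $\sqrt{\STOPTOL^2+\varepsilon^2/(4r^2)}$ by $\STOPTOL+\varepsilon/(2r)$ via the subadditivity of the square root. The paper simply states the final chain of inequalities without spelling out the elementary justification you include.
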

\begin{proof}
Substituting the stopping criterion into inequality \eqref{diste2r} yields
$$\dist(x^*,x_{k+1})\leq\sqrt{\STOPTOL^2+\frac{\varepsilon^2}{4r^2}}+\frac{\varepsilon}{2r}\leq\STOPTOL+\frac{\varepsilon}{r}.$$
\end{proof}
Corollary \ref{cor:tol} is our first convergence result, showing that if for some $k,$ $\phi_k^\varepsilon(x_{k+1})$ comes close enough to $f(x_{k+1})$ to trigger the stopping condition of the algorithm, then $x_{k+1}$ is within a fixed distance of the true proximal point. Now we aim to prove that the stopping condition will always be activated at some point, and the algorithm will not run indefinitely. We begin with Lemma \ref{lembundle4}, which shows that if at any iteration the new point is equal to the previous one, the stopping condition is triggered and the approximal point is within $\varepsilon/r$ of $x^*.$

\begin{lem}\label{lembundle4}
If $x_{k+2}=x_{k+1}$ for some $k,$ then the algorithm stops, and $\dist(x_{k+2},x^*)\leq\frac{\varepsilon}{r}.$
\end{lem}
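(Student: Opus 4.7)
The plan is to show two things in sequence: first that the stopping criterion is triggered when $x_{k+2}=x_{k+1}$, and second that the distance bound follows directly from Lemma \ref{lembundle2}.

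For the first part, I would use the fact that after the oracle call in Step 3 at iteration $k$, the triple $(x_{k+1},f_{k+1},g_{k+1}^\varepsilon)$ becomes available, and by Step 5 we have $k+1\in\C_{k+1}$, so this triple sits inside $\D_{k+1}$. By the definition of $\phi_{k+1}^\varepsilon$ in \eqref{phikeps}, this gives the underestimate
$$\phi_{k+1}^\varepsilon(x)\geq f_{k+1}+g_{k+1}^{\varepsilon\top}(x-x_{k+1})\quad\text{for all }x\in\R^n,$$
and evaluating at $x=x_{k+1}$ yields $\phi_{k+1}^\varepsilon(x_{k+1})\geq f_{k+1}=f(x_{k+1})$. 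Using the hypothesis $x_{k+2}=x_{k+1}$, this rewrites as
$$f(x_{k+2})=f_{k+2}\leq\phi_{k+1}^\varepsilon(x_{k+2}),$$
so $\frac{f_{k+2}-\phi_{k+1}^\varepsilon(x_{k+2})}{r}\leq 0\leq\STOPTOL^2$, and Step 4 halts the algorithm at iteration $k+1$.

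For the distance bound, I would apply Lemma \ref{lembundle2} with the index shifted by one (replacing $k$ by $k+1$, which is legitimate since $x_{k+2}=\Prox_{\phi_{k+1}^\varepsilon}^r(z)$ by Step 3). This gives
$$\dist(x^*,x_{k+2})\leq\sqrt{\frac{f(x_{k+2})-\phi_{k+1}^\varepsilon(x_{k+2})+\frac{\varepsilon^2}{4r}}{r}}+\frac{\varepsilon}{2r}.$$
Since the first part has established $f(x_{k+2})-\phi_{k+1}^\varepsilon(x_{k+2})\leq 0$, the square root is bounded above by $\sqrt{\varepsilon^2/(4r^2)}=\varepsilon/(2r)$, and adding the remaining $\varepsilon/(2r)$ produces the claimed bound $\varepsilon/r$.

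There is no serious obstacle here; the only subtle point is verifying that the index set $\C_{k+1}$ really does contain $k+1$ so that $\phi_{k+1}^\varepsilon$ includes the cutting plane generated at $x_{k+1}$. This is guaranteed by Step 5, which inserts the index $k+1$ into $\C_{k+1}$ before incrementing. Once that inclusion is noted, the proof is essentially a one-line reduction combined with an application of Lemma \ref{lembundle2}.
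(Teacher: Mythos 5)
Your proposal is correct and follows essentially the same route as the paper's own proof: both arguments use the membership $k+1\in\C_{k+1}$ to conclude $\phi_{k+1}^\varepsilon(x_{k+1})\geq f(x_{k+1})$, identify $x_{k+2}$ with $x_{k+1}$ to trigger the stopping test, and then apply Lemma \ref{lembundle2} with $f(x_{k+2})-\phi_{k+1}^\varepsilon(x_{k+2})\leq 0$ to obtain the bound $\varepsilon/r$. No gaps.
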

\begin{proof}
We have $\phi_{k+1}^\varepsilon(x)=\max\limits_{i\in\C_{k+1}}\{f_i+g_i^{\varepsilon\top}(x-x_i)\},$ so in particular $\phi_{k+1}^\varepsilon(x_{k+1})=\max\limits_{i\in\C_{k+1}}\{f_i+g_i^{\varepsilon\top}(x_{k+1}-x_i)\}.$ Since $k+1\in\C_{k+1},$ $f(x_{k+1})+g_{k+1}^{\varepsilon\top}(x_{k+1}-x_{k+1})=f(x_{k+1}).$ Hence,
$\phi_{k+1}^\varepsilon(x_{k+1})\geq f(x_{k+1}),$ which if $x_{k+1}=x_{k+2}$ is equivalent to $\phi_{k+1}^\varepsilon(x_{k+2})\geq f(x_{k+2}),$ and the stopping criterion is satisfied. Then by Lemma \ref{lembundle2}, we have
\begin{align*}
\dist(x^*,x_{k+2})&\leq\sqrt{\frac{f(x_{k+2})-\phi_{k+1}^\varepsilon(x_{k+2})+\frac{\varepsilon^2}{4r}}{r}}+\frac{\varepsilon}{2r}\\
&\leq\sqrt{\frac{\varepsilon^2}{4r^2}}+\frac{\varepsilon}{2r}\\
&=\frac{\varepsilon}{r}.
\end{align*}
\end{proof}
Next, we prove convergence within $\varepsilon/r$ in the case that the stopping condition is never triggered and the algorithm does not stop. We show that this is true by establishing Lemmas \ref{lembundle3} through \ref{ppp}, which lead to Theorem \ref{lempconv}, the main convergence result.
\begin{lem}\label{lembundle3}
Suppose the algorithm never stops. Then the function $$\Phi(k):=\phi_k^\varepsilon(x_{k+1})+\frac{r}{2}\|z-x_{k+1}\|^2$$ is strictly increasing and bounded above.
\end{lem}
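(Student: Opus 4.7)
The plan is to split the proof into two independent parts: (i) a uniform upper bound on $\Phi(k)$ and (ii) strict monotonicity.

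For the upper bound, I would exploit the defining property of $x_{k+1}$ as a proximal point of $\phi_k^\varepsilon$ at $z$. Since $x_{k+1}$ minimizes the map $y\mapsto\phi_k^\varepsilon(y)+\frac{r}{2}\|z-y\|^2$, evaluating that map at $y=z$ gives
\[
\Phi(k)=\phi_k^\varepsilon(x_{k+1})+\frac{r}{2}\|z-x_{k+1}\|^2\leq\phi_k^\varepsilon(z)+0.
\]
By Lemma \ref{abcd}(b), $\phi_k^\varepsilon(z)=f(z)$, so $\Phi(k)\leq f(z)$ for every $k$, which is a constant independent of $k$.

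For strict monotonicity, the key ingredient is the aggregate-subgradient minorant from Lemma \ref{abcd}(c). Evaluating that inequality at $x=x_{k+2}$ and adding $\frac{r}{2}\|z-x_{k+2}\|^2$ to both sides gives
\[
\Phi(k+1)\geq \phi_k^\varepsilon(x_{k+1})+r(z-x_{k+1})^\top(x_{k+2}-x_{k+1})+\frac{r}{2}\|z-x_{k+2}\|^2.
\]
Setting $u=z-x_{k+1}$ and $v=z-x_{k+2}$ (so that $x_{k+2}-x_{k+1}=u-v$), the right-hand side equals
\[
\phi_k^\varepsilon(x_{k+1})+r u^\top(u-v)+\tfrac{r}{2}\|v\|^2 = \phi_k^\varepsilon(x_{k+1})+\tfrac{r}{2}\|u\|^2+\tfrac{r}{2}\|u-v\|^2 = \Phi(k)+\tfrac{r}{2}\|x_{k+1}-x_{k+2}\|^2,
\]
after completing the square. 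Thus $\Phi(k+1)\geq\Phi(k)+\frac{r}{2}\|x_{k+1}-x_{k+2}\|^2$.

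To upgrade the inequality to strict, I invoke Lemma \ref{lembundle4}: if $x_{k+1}=x_{k+2}$ then the stopping test is triggered, contradicting the standing hypothesis that the algorithm never stops. Hence $x_{k+1}\neq x_{k+2}$ and $\Phi(k+1)>\Phi(k)$. The only mildly delicate point is the completion-of-the-square identity; everything else is a direct appeal to the optimality of $x_{k+1}$, Lemma \ref{abcd}, and Lemma \ref{lembundle4}.
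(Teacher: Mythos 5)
Your proof is correct and follows essentially the same route as the paper: the upper bound $\Phi(k)\leq f(z)$ comes from evaluating the prox objective at $z$ together with Lemma \ref{abcd}(b), monotonicity comes from Lemma \ref{abcd}(c) at $x=x_{k+2}$ plus the same completing-the-square identity (the paper merely packages it through an auxiliary function $L_k$ rather than your $u,v$ substitution), and strictness is obtained by the same appeal to Lemma \ref{lembundle4}.
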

\begin{proof}
Recall that $\phi_k^\varepsilon(z)=f(z)$ by Lemma \ref{abcd} (b). Since $x_{k+1}$ is the proximal point of $\phi_k^\varepsilon$ at $z,$ we have
$$\phi_k^\varepsilon(x_{k+1})+\frac{r}{2}\|z-x_{k+1}\|^2\leq\phi_k^\varepsilon(z)+\frac{r}{2}\|z-z\|^2=f(z).$$
Therefore, $\Phi(k)$ is bounded above by $f(z)$ for all $k.$ Define
$$L_k(x):=\phi_k^\varepsilon(x_{k+1})+\frac{r}{2}\|z-x_{k+1}\|^2+\frac{r}{2}\|x-x_{k+1}\|^2.$$
Since $x_{k+1}=\Prox_{\phi_k^\varepsilon}^r(z),$ we have
\begin{align*}
L_k(x_{k+1})=\phi_k^\varepsilon(x_{k+1})+\frac{r}{2}\|z-x_{k+1}\|^2\leq\phi_k^\varepsilon(z)=f(z).
\end{align*}
By Lemma \ref{abcd} (c) with $x=x_{k+2},$ we have
\begin{equation}\label{phix2}
\phi_{k+1}^\varepsilon(x_{k+2})\geq\phi_k^\varepsilon(x_{k+1})+r(z-x_{k+1})^\top(x_{k+2}-x_{k+1}).
\end{equation}
Using inequality \eqref{phix2} we have
\begin{align*}
L_{k+1}(x_{k+2})&=\phi_{k+1}^\varepsilon(x_{k+2})+\frac{r}{2}\|z-x_{k+2}\|^2\\
&\geq\phi_k^\varepsilon(x_{k+1})+r(z-x_{k+1})^\top(x_{k+2}-x_{k+1})+\frac{r}{2}\|z-x_{k+2}\|^2\\
&=L_k(x_{k+1})+r(z-x_{k+1})^\top(x_{k+2}-x_{k+1})+\frac{r}{2}\|z-x_{k+2}\|^2-\frac{r}{2}\|z-x_{k+1}\|^2.
\end{align*}
Expanding the norms above, we have
\begin{align*}
&\frac{r}{2}\|z-x_{k+2}\|^2-\frac{r}{2}\|z-x_{k+1}\|^2\\
=&\frac{r}{2}\left[\|x_{k+2}\|^2-2x_{k+2}^\top x_{k+1}+\|x_{k+1}\|^2-2z^\top(x_{k+2}-x_{k+1})+2x_{k+2}^\top x_{k+1}-2\|x_{k+1}\|^2\right]\\
=&\frac{r}{2}\|x_{k+2}-x_{k+1}\|^2-r(z-x_{k+1})^\top(x_{k+2}-x_{k+1}).
\end{align*}
This gives us that
\begin{align*}
&L_{k+1}(x_{k+2})\\
\geq&L_k(x_{k+1})+r(z-x_{k+1})^\top(x_{k+2}-x_{k+1})+\frac{r}{2}\|x_{k+2}-x_{k+1}\|^2-r(z-x_{k+1})^\top(x_{k+2}-x_{k+1})\\
=&L_k(x_{k+1})+\frac{r}{2}\|x_{k+2}-x_{k+1}\|^2.
\end{align*}\normalsize
Since $x_{k+2}\neq x_{k+1}$ for all $k$ by Lemma \ref{lembundle4}, the equality above becomes
$$\phi_{k+1}^\varepsilon(x_{k+2})+\frac{r}{2}\|z-x_{k+2}\|^2\geq\phi_k^\varepsilon(x_{k+1})+\frac{r}{2}\|z-x_{k+1}\|^2+\frac{r}{2}\|x_{k+2}-x_{k+1}\|^2,$$
which by the definition of $\Phi$ yields
\begin{equation}\label{phik1phik}
\Phi(k+1)\geq\Phi(k)+\frac{r}{2}\|x_{k+2}-x_{k+1}\|^2>\Phi(k).
\end{equation}
Therefore, $\Phi(k)$ is a strictly increasing function.
\end{proof}
\begin{cor}\label{corbundle1}
Suppose the algorithm never stops. Then $\lim\limits_{k\rightarrow\infty}\|x_{k+1}-x_k\|=0.$
\end{cor}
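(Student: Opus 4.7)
The plan is to invoke Lemma \ref{lembundle3} directly: since $\Phi(k)$ is strictly increasing and bounded above by $f(z)$, it is a convergent real sequence, and therefore its consecutive differences $\Phi(k+1)-\Phi(k)$ must tend to $0$ as $k\to\infty$.

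Next, I would use inequality \eqref{phik1phik} from the proof of Lemma \ref{lembundle3}, which states
$$\Phi(k+1)\geq\Phi(k)+\frac{r}{2}\|x_{k+2}-x_{k+1}\|^2.$$
Rearranging gives
$$0\leq\frac{r}{2}\|x_{k+2}-x_{k+1}\|^2\leq\Phi(k+1)-\Phi(k),$$
so the squeeze from both sides together with $\Phi(k+1)-\Phi(k)\to 0$ forces $\|x_{k+2}-x_{k+1}\|\to 0$. Reindexing then yields $\lim_{k\to\infty}\|x_{k+1}-x_k\|=0$, which is the desired conclusion.

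There is essentially no obstacle here: all the work was already done in Lemma \ref{lembundle3}, and the corollary is the standard ``monotone bounded sequence has vanishing increments'' argument applied to the telescoping lower bound on $\Phi(k+1)-\Phi(k)$. The only care needed is to note that the assumption ``the algorithm never stops'' is what lets us invoke Lemma \ref{lembundle3} (which in turn relied on Lemma \ref{lembundle4} to guarantee $x_{k+2}\ne x_{k+1}$), and to make sure the index shift between $\|x_{k+2}-x_{k+1}\|$ and $\|x_{k+1}-x_k\|$ is handled cleanly.
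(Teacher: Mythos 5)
Your proposal is correct and follows essentially the same route as the paper: both invoke inequality \eqref{phik1phik} together with the convergence of the monotone bounded sequence $\Phi(k)$ from Lemma \ref{lembundle3}, then squeeze $\frac{r}{2}\|x_{k+2}-x_{k+1}\|^2$ between $0$ and $\Phi(k+1)-\Phi(k)\to 0$. No gaps.
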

\begin{proof}
By inequality \eqref{phik1phik}, we have
\begin{align*}
0&\leq\frac{r}{2}\|x_{k+2}-x_{k+1}\|^2\\
&\leq\Phi(k+1)-\Phi(k).
\end{align*}
By Lemma \ref{lembundle3}, both terms on the right-hand side above converge, and they converge to the same place. Then
$$0\leq\lim\limits_{k\rightarrow\infty}\frac{r}{2}\|x_{k+2}-x_{k+1}\|^2\leq0,$$
and since $r\neq0,$ we have that $\|x_{k+2}-x_{k+1}\|\rightarrow0.$
\end{proof}
We point out here that the sequence $\{x_k\}$ has a convergent subsequence. This is because the iterates are contained in a compact set (a ball about $z$), so that the Bolzano-Weierstrass Theorem applies. We use this fact to prove the results that follow.
\begin{lem}\label{ppp}
Let $f$ be locally $K$-Lipschitz. Suppose the algorithm never stops. Then for any accumulation point $p$ of $\{x_k\},$ $\lim\limits_{j\rightarrow\infty}\phi_{k_j}^\varepsilon(x_{{k_j}+1})=f(p),$ where $\{x_{k_j}\}$ is any subsequence converging to $p$.
\end{lem}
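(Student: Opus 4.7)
The plan is a squeeze argument. Since $\phi_k^\varepsilon$ at the approximal point is bounded below by any linear piece in the bundle and bounded above by the inexactness bound \eqref{epsfofz}, I would pick the piece indexed by $i = k_j$ in both directions and let the subsequence do the work.

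First I note that $x_{k_j+1} \to p$ as well: by Corollary \ref{corbundle1}, $\|x_{k_j+1} - x_{k_j}\| \to 0$, while $x_{k_j} \to p$ by hypothesis. Since $f$ is convex and hence continuous, both $f(x_{k_j})$ and $f(x_{k_j+1})$ converge to $f(p)$. The algorithm guarantees $k_j \in \C_{k_j}$, so the linear piece $f_{k_j} + g_{k_j}^{\varepsilon\top}(x - x_{k_j})$ appears in the maximum defining $\phi_{k_j}^\varepsilon$, giving
$$\phi_{k_j}^\varepsilon(x_{k_j+1}) \geq f(x_{k_j}) + g_{k_j}^{\varepsilon\top}(x_{k_j+1} - x_{k_j}).$$
Because the subgradient norms satisfy $\|g_k^\varepsilon\| \leq K + \varepsilon$ (by Lemma \ref{gepsbdd} together with the local $K$-Lipschitz property of $f$), the inner product tends to zero and $\liminf_j \phi_{k_j}^\varepsilon(x_{k_j+1}) \geq f(p)$. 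For the matching upper bound, I would apply \eqref{epsfofz} with $i = k_j$ and $x = x_{k_j+1}$ to obtain
$$\phi_{k_j}^\varepsilon(x_{k_j+1}) \leq f(x_{k_j+1}) + \varepsilon \|x_{k_j+1} - x_{k_j}\|,$$
whose right-hand side tends to $f(p)$ by the same reasoning, so $\limsup_j \phi_{k_j}^\varepsilon(x_{k_j+1}) \leq f(p)$. Squeezing completes the proof.

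The main obstacle here is essentially bookkeeping rather than a genuine difficulty: one must confirm that $k_j \in \C_{k_j}$ so that both the bundle lower bound and inequality \eqref{epsfofz} with $i = k_j$ are legitimately available, and that the approximate subgradients $g_k^\varepsilon$ are uniformly norm-bounded. The first is built into the algorithm's update rule in Step 5, which always ensures the most recent index lies in the current bundle; the second follows because the iterates remain in a bounded region around $z$ on which $f$ is $K$-Lipschitz, so $\partial f(x_k)$ is contained in the ball of radius $K$, and Lemma \ref{gepsbdd} then yields $\|g_k^\varepsilon\| \leq K + \varepsilon$.
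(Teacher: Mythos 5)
Your lower bound is sound and is essentially the paper's own argument: Lemma \ref{abcd}(d) at $x=x_{k_j+1}$, the uniform bound $\|g_{k_j}^\varepsilon\|\leq K+\varepsilon$, and Corollary \ref{corbundle1} together give $\liminf_j\phi_{k_j}^\varepsilon(x_{k_j+1})\geq f(p)$. The gap is in your upper bound. You invoke \eqref{epsfofz} with the \emph{particular} index $i=k_j$, but the derivation preceding \eqref{epsfofz} only establishes $f(x)+\varepsilon\|x-x_i\|\geq f_i+g_i^{\varepsilon\top}(x-x_i)$ for each $i$ separately; passing to the maximum over $i\in\C_{k_j}$ on the right-hand side forces the error term on the left to become $\varepsilon\max_{i\in\C_{k_j}}\|x-x_i\|$, or at best $\varepsilon\|x-x_{i^*}\|$ for the index $i^*$ that attains the max. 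At $x=x_{k_j+1}$ there is no reason the active piece should be the one anchored at $x_{k_j}$: it could, for instance, be the piece anchored at the prox-centre $z$ (index $0$), in which case the only available bound is $\phi_{k_j}^\varepsilon(x_{k_j+1})\leq f(x_{k_j+1})+\varepsilon\|x_{k_j+1}-z\|$, whose error term does not vanish. So the inequality $\phi_{k_j}^\varepsilon(x_{k_j+1})\leq f(x_{k_j+1})+\varepsilon\|x_{k_j+1}-x_{k_j}\|$ is not justified, and $\limsup_j\phi_{k_j}^\varepsilon(x_{k_j+1})\leq f(p)$ does not follow by this route.

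The repair is simpler than the step you attempted, and it is the one the paper uses: the hypothesis that the algorithm never stops means the stopping test $\frac{f_{k+1}-\phi_k^\varepsilon(x_{k+1})}{r}\leq\STOPTOL^2$ fails at every iteration, hence $f(x_{k_j+1})\geq\phi_{k_j}^\varepsilon(x_{k_j+1})$ for every $j$. Since $f(x_{k_j+1})\rightarrow f(p)$ by continuity of the finite-valued convex $f$, this yields $\limsup_j\phi_{k_j}^\varepsilon(x_{k_j+1})\leq f(p)$ immediately, and your squeeze then closes. (The paper reaches the same two bounds slightly differently, working at $x_{k_j}$ and transferring to $x_{k_j+1}$ via the $(K+\varepsilon)$-Lipschitz continuity of $\phi_k^\varepsilon$, but that is a cosmetic difference; the substantive point is that the upper bound must come from the non-termination hypothesis, not from \eqref{epsfofz}.)
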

\begin{proof}
We have $f_{k+1}\geq\phi_k^\varepsilon(x_{k+1})$ (otherwise, the stopping criterion is satisfied and the algorithm stops). By Lemma \ref{abcd} (e), $\phi_k^\varepsilon$ is $(K+\varepsilon)$-Lipschitz:
$$\|\phi_k^\varepsilon(x_{k+1})-\phi_k^\varepsilon(x_k)\|\leq(K+\varepsilon)\|x_{k+1}-x_k\|,$$
and by Corollary \ref{corbundle1} we have that $\lim\limits_{k\rightarrow\infty}\|\phi_k^\varepsilon(x_{k+1})-\phi_k^\varepsilon(x_k)\|=0.$ We also have
\begin{align*}
f(x_{k+1})&\geq\phi_k^\varepsilon(x_{k+1})-\phi_k^\varepsilon(x_k)+\phi_k^\varepsilon(x_k)\\
&\geq\phi_k^\varepsilon(x_k)-(K+\varepsilon)\|x_{k+1}-x_k\|.
\end{align*}
By Lemma \ref{abcd} (d) with $x=x_k,$
\begin{equation}\label{eq:pk1pk}
f(x_{k+1})\geq\phi_k^\varepsilon(x_k)-(K+\varepsilon)\|x_{k+1}-x_k\|\geq f(x_k)-(K+\varepsilon)\|x_{k+1}-x_k\|.
\end{equation}
Select any subsequence $\{x_{k_j}\}$ such that $\lim\limits_{j\rightarrow\infty}x_{k_j}=p.$ Since $\lim\limits_{j\rightarrow\infty}\|x_{k_j+1}-x_{k_j}\|=0$ by Corollary \ref{corbundle1}, we have that $\lim\limits_{j\rightarrow\infty}x_{k_j+1}=p$ as well.
Hence, taking the limit of inequality \eqref{eq:pk1pk} as $j\rightarrow\infty,$ and employing Corollary \ref{corbundle1}, we have
$$f(p)\geq\lim\limits_{j\rightarrow\infty}\phi_{k_j}^\varepsilon(x_{k_j})\geq f(p).$$
Therefore, $\lim\limits_{j\rightarrow\infty}\phi_{k_j}^\varepsilon(x_{k_j})=f(p),$ and since $\lim\limits_{j\rightarrow\infty}\|\phi_{k_j}^\varepsilon(x_{{k_j}+1})-\phi_{k_j}^\varepsilon(x_{k_j})\|=0,$ we have that $$\lim\limits_{j\rightarrow\infty}\phi_{k_j}^\varepsilon(x_{{k_j}+1})=f(p).$$
\end{proof}
Now the stage is set for the following theorem, which proves that the algorithm converges to a vector that is within a fixed distance of $\Prox_f^r(z).$
\begin{thm}\label{lempconv}Let $f$ be locally $K$-Lipschitz.
Suppose the algorithm never stops. Then for any accumulation point $p$ of $\{x_k\},$ $\dist(x^*,p)\leq\frac{\varepsilon}{r}.$
\end{thm}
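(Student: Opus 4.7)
The plan is to combine Lemma~\ref{lembundle2} (which bounds $\dist(x^*,x_{k+1})$ in terms of $f(x_{k+1})-\phi_k^\varepsilon(x_{k+1})$) with Lemma~\ref{ppp} (which says $\phi_{k_j}^\varepsilon(x_{k_j+1})\to f(p)$ along any subsequence $x_{k_j}\to p$) and the continuity of $f$, then pass to the limit.

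First I fix an accumulation point $p$ of $\{x_k\}$ and select a subsequence $\{x_{k_j}\}$ with $x_{k_j}\to p$. Corollary~\ref{corbundle1} gives $\|x_{k_j+1}-x_{k_j}\|\to 0$, so $x_{k_j+1}\to p$ as well. Because $f$ is finite-valued and convex on $\R^n$, it is continuous, so $f(x_{k_j+1})\to f(p)$. Combined with Lemma~\ref{ppp}, this yields
\begin{equation*}
f(x_{k_j+1}) - \phi_{k_j}^\varepsilon(x_{k_j+1}) \longrightarrow f(p) - f(p) = 0.
\end{equation*}

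Next I apply Lemma~\ref{lembundle2} along this subsequence:
\begin{equation*}
\dist(x^*, x_{k_j+1}) \leq \sqrt{\frac{f(x_{k_j+1}) - \phi_{k_j}^\varepsilon(x_{k_j+1}) + \tfrac{\varepsilon^2}{4r}}{r}} + \frac{\varepsilon}{2r}.
\end{equation*}
Since $x_{k_j+1}\to p$, the left-hand side tends to $\dist(x^*,p)$; and by the limit above, the right-hand side tends to $\sqrt{\varepsilon^2/(4r^2)} + \varepsilon/(2r) = \varepsilon/(2r) + \varepsilon/(2r) = \varepsilon/r$. Taking $j\to\infty$ therefore gives $\dist(x^*, p) \leq \varepsilon/r$, as claimed.

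There is no real obstacle here: the hard technical work has been absorbed into the preceding lemmas (the tilt-correct machinery in Lemma~\ref{gepsbdd}, the monotonicity argument in Lemma~\ref{lembundle3}, and the Lipschitz-plus-telescoping argument in Lemma~\ref{ppp}). The only points to be careful about are (i) invoking continuity of the convex finite-valued function $f$ to pass the limit through $f(\cdot)$, and (ii) making sure the subsequence $\{x_{k_j+1}\}$ (rather than $\{x_{k_j}\}$) is the one appearing in Lemma~\ref{lembundle2}, which is why Corollary~\ref{corbundle1} is needed to conclude $x_{k_j+1}\to p$ from $x_{k_j}\to p$.
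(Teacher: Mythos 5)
Your proof is correct, but it reaches the bound by a different route than the paper. The paper does not pass to the limit in the distance estimate \eqref{diste2r} of Lemma \ref{lembundle2}; instead it returns to the underlying inequality \eqref{tiger}, takes the limit along the subsequence (using Corollary \ref{corbundle1} and Lemma \ref{ppp}) to obtain the variational inequality $f(x)+\varepsilon\|x-p\|\geq f(p)+r(z-p)^\top(x-p)$ for all $x$, and then adds this (evaluated at $x=x^*$) to the subgradient inequality \eqref{eq:xstarprox} for $r(z-x^*)\in\partial f(x^*)$, which yields $\varepsilon\|x^*-p\|\geq r\|x^*-p\|^2$ directly. You instead treat Lemma \ref{lembundle2} as a black box, show $f(x_{k_j+1})-\phi_{k_j}^\varepsilon(x_{k_j+1})\to 0$ via Lemma \ref{ppp} together with continuity of the finite-valued convex $f$, and let continuity of the norm and the square root finish the job; the arithmetic $\sqrt{\varepsilon^2/(4r^2)}+\varepsilon/(2r)=\varepsilon/r$ is exactly the same simplification the paper performs in Lemma \ref{lembundle4}. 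Your argument is shorter and reuses the quantitative estimate already proved, at the mild cost of explicitly invoking continuity of $f$ (which is harmless here, and is in any case already implicit in the proof of Lemma \ref{ppp}); the paper's version is slightly longer but produces the intermediate inequality \eqref{fp1}, which characterizes $p$ as an approximate proximal point and is of some independent interest. Both arguments rest on the same three pillars (Lemmas \ref{lembundle2}/\eqref{tiger}, \ref{ppp}, and Corollary \ref{corbundle1}) and both are valid.
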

\begin{proof}
By inequality \eqref{tiger},
$$f(x)+\varepsilon\|x-x_{k+1}\|\geq\phi_k^\varepsilon(x_{k+1})+r(z-x_{k+1})^\top(x-x_{k+1})\mbox{ for all }x\in\R^n.$$
Select any subsequence $\{x_{k_j}\}$ such that $\lim\limits_{j\rightarrow\infty}x_{k_j}=p.$  Taking the limit as $j\rightarrow\infty$ and using Corollary \ref{corbundle1} and Lemma \ref{ppp}, we have
\begin{align}
f(x)+\varepsilon\|x-p\|&=\lim\limits_{j\rightarrow\infty}f(x)+\varepsilon\|x-x_{{k_j}+1}\|\nonumber\\
&\geq\lim\limits_{j\rightarrow\infty}\left[\phi_{k_j}^\varepsilon(x_{{k_j}+1})+r(z-x_{{k_j}+1})^\top(x-x_{{k_j}+1})\right]\nonumber\\
&=f(p)+r(z-p)^\top(x-p)\mbox{ for all }x\in\R^n.\label{fp1}
\end{align}
By equation \eqref{eq:xstarprox}, we have
\begin{equation}\label{fp2}
f(p)\geq f(x^*)+r(x^*-z)^\top(x^*-p).
\end{equation}
By equation \eqref{fp1}, in particular we have
\begin{equation}\label{fp3}
f(x^*)+\varepsilon\|x^*-p\|\geq f(p)+r(z-p)^\top(x^*-p).
\end{equation}
Adding equations \eqref{fp2} and \eqref{fp3} yields
\begin{align*}
\varepsilon\|x^*-p\|&\geq r(x^*-z+z-p)^\top(x^*-p)\\
&=r\|x^*-p\|^2.
\end{align*}
Therefore, $\dist(x^*,p)\leq\frac{\varepsilon}{r}.$
\end{proof}
Lastly, we show that the algorithm will always terminate. With the proof of Theorem \ref{stopsforsure} below, we will have proved that the algorithm does not run indefinitely, and that when it stops the output is within tolerance of the point we seek.
\begin{thm}\label{stopsforsure}
Let $f$ be locally $K$-Lipschitz. If $\|x_{k+1}-x_k\|<\frac{r\STOPTOL^2}{K+2\varepsilon},$ then the stopping condition is satisfied and the algorithm stops.
\end{thm}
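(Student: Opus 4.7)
The stopping test of Step 4 is $(f_{k+1}-\phi_k^\varepsilon(x_{k+1}))/r \leq \STOPTOL^2$, equivalently
\[
f(x_{k+1})-\phi_k^\varepsilon(x_{k+1}) \leq r\STOPTOL^2.
\]
Hence the theorem will follow once I establish the linear estimate
\[
f(x_{k+1})-\phi_k^\varepsilon(x_{k+1}) \leq (K+2\varepsilon)\,\|x_{k+1}-x_k\|,
\]
because then the hypothesis $\|x_{k+1}-x_k\|<\frac{r\STOPTOL^2}{K+2\varepsilon}$ immediately yields the stopping inequality.

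The natural route is to lower-bound $\phi_k^\varepsilon(x_{k+1})$ using one of its own affine pieces, then invoke local Lipschitz continuity of $f$. Since $k\in\C_k$, Lemma \ref{abcd}(d) evaluated at $x=x_{k+1}$ gives
\[
\phi_k^\varepsilon(x_{k+1}) \geq f(x_k) + g_k^{\varepsilon\top}(x_{k+1}-x_k),
\]
so that
\[
f(x_{k+1}) - \phi_k^\varepsilon(x_{k+1}) \leq [f(x_{k+1})-f(x_k)] - g_k^{\varepsilon\top}(x_{k+1}-x_k).
\]
Local $K$-Lipschitz continuity of $f$ bounds the first bracket by $K\|x_{k+1}-x_k\|$. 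For the inner-product term, Lemma \ref{gepsbdd} supplies a decomposition $g_k^\varepsilon=\bar g_k+\delta_k$ with $\bar g_k\in\partial f(x_k)$ and $\|\delta_k\|<\varepsilon$; then $-\delta_k^\top(x_{k+1}-x_k) \leq \varepsilon\|x_{k+1}-x_k\|$ by Cauchy-Schwarz, while the true-subgradient contribution is controlled using $\|\bar g_k\|\leq K$, which follows from the Lipschitz hypothesis on $f$.

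The main point of care is the exact value of the combined constant. A direct bookkeeping of the three estimates above (Lipschitz on $f$, Cauchy-Schwarz with $\|\bar g_k\|\leq K$, and Cauchy-Schwarz with $\|\delta_k\|<\varepsilon$) would produce the looser bound $(2K+\varepsilon)\|x_{k+1}-x_k\|$. To recover the sharper constant $K+2\varepsilon$ stated in the theorem, the $\bar g_k$ piece should be merged with the Lipschitz bound on $f(x_{k+1})-f(x_k)$ using the convex subgradient inequality $f(x_k)+\bar g_k^\top(x_{k+1}-x_k)\leq f(x_{k+1})$, so that only a single factor of $K\|x_{k+1}-x_k\|$ enters rather than two. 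Once the linear estimate is secured, substituting the hypothesis on $\|x_{k+1}-x_k\|$ completes the proof.
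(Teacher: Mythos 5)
Your setup is the same as the paper's: reduce the claim to a linear estimate $f(x_{k+1})-\phi_k^\varepsilon(x_{k+1})\leq C\|x_{k+1}-x_k\|$, obtain it from Lemma \ref{abcd}(d) at $x=x_{k+1}$, and control the size of $g_k^\varepsilon$ (the paper bounds $\|g_k^\varepsilon\|\leq K+\varepsilon$ via Lemma \ref{abcd}(e), which is equivalent to your decomposition $g_k^\varepsilon=\bar g_k+\delta_k$ from Lemma \ref{gepsbdd}). The genuine gap is in your final ``sharpening.'' The quantity you must bound \emph{above} is
\[
f(x_{k+1})-f(x_k)-\bar g_k^\top(x_{k+1}-x_k),
\]
and the subgradient inequality $f(x_{k+1})\geq f(x_k)+\bar g_k^\top(x_{k+1}-x_k)$ bounds it \emph{below} by $0$; it cannot cap it at $K\|x_{k+1}-x_k\|$. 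Indeed, for $f=K|\cdot|$ on $\R$ with $x_k=0$, $\bar g_k=-K\in\partial f(0)$ and $x_{k+1}>0$, this quantity equals $2K\|x_{k+1}-x_k\|$, so the two factors of $K$ cannot be merged into one in general. What your argument rigorously yields is $f(x_{k+1})-\phi_k^\varepsilon(x_{k+1})\leq(2K+\varepsilon)\|x_{k+1}-x_k\|$, which proves the theorem only with the threshold $r\STOPTOL^2/(2K+\varepsilon)$ rather than the stated $r\STOPTOL^2/(K+2\varepsilon)$; these differ whenever $K\neq\varepsilon$, and when $K>\varepsilon$ the stated hypothesis is too weak for your estimate to trigger the stopping test.

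For what it is worth, the paper's own proof stumbles at exactly the same spot: it derives $f(x_k)-\phi_k^\varepsilon(x_{k+1})\leq(K+\varepsilon)\|x_{k+1}-x_k\|$ and then silently replaces $f(x_k)$ by $f(x_{k+1})$ on the left-hand side; paying for that replacement with the Lipschitz bound gives precisely your $2K+\varepsilon$ (and the paper's concluding threshold $r\STOPTOL^2/(K+\varepsilon)$ does not even match the $K+2\varepsilon$ of the statement). So your bookkeeping is the honest version of the argument, but the constant $K+2\varepsilon$ is not established by either route: to make your write-up correct, either prove and state the result with $2K+\varepsilon$ in the denominator, or supply a genuinely different argument for the smaller constant.
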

\begin{proof}
By Lemma \ref{abcd} (d) with $x=x_{k+1},$ we have
\begin{align*}
\phi_k^\varepsilon(x_{k+1})&\geq f(x_k)+g_k^{\varepsilon\top}(x_{k+1}-x_k)\nonumber\\
f(x_k)-\phi_k^\varepsilon(x_{k+1})&\leq-g_k^{\varepsilon\top}(x_{k+1}-x_k)\nonumber\\
&\leq\|g_k^\varepsilon\|\|x_{k+1}-x_k\|.
\end{align*}
By Lemma \ref{abcd} (e), $\phi_k^\varepsilon$ is $(K+\varepsilon)$-Lipschitz. Hence, $\|g_k^\varepsilon\|\leq K+\varepsilon,$ and
$$f(x_{k+1})-\phi_k^\varepsilon(x_{k+1})\leq(K+\varepsilon)\|x_{k+1}-x_k\|.$$

Therefore, if $\|x_{k+1}-x_k\|<\frac{r\STOPTOL^2}{K+\varepsilon},$ then
\begin{align*}
f(x_{k+1})-\phi_k^\varepsilon(x_{k+1})&<(K+\varepsilon)\frac{r\STOPTOL^2}{K+\varepsilon}\\
\frac{f(x_{k+1})-\phi_k^\varepsilon(x_{k+1})}{r}&<\STOPTOL^2.
\end{align*}
\end{proof}
With this, we have that the sequence $\{x_k\}$ generated by the algorithm must have an accumulation point $p$ that is within $\varepsilon/r$ of $\Prox_f^r(z),$ and the algorithm will always terminate at some point $x_k$ such that $\|x_k-x^*\|\leq s_{\tt tol}+\varepsilon/r.$

%%%%%
\section{Numerical Tests}\label{sec:numerics}
%%%%%

In this section, we present the results of a number of numerical tests performed using this algorithm. The tests were run using MATLAB version 8.4.0.150421 (R2014b), on a 2.8 GHz Intel Core 2 Duo processor with a 64-bit operating system.

\subsection{Bundle variants}

We set $r=1,$ and compare four bundle variants: $3$-bundle, $(k+2)$-bundle, active bundle, and almost active bundle. In the $3$-bundle variant, each iteration uses the three bundle elements indexed by $\C_k=\{-1,0,k\}.$ In the $(k+2)$-bundle variant, we keep all the bundle elements from each previous iteration (replacing the old aggregate with the new one), and add the $k$\textsuperscript{th} element. So the bundle index set is $\C_k=\{-1,0,1,2,3,\ldots,k\},$ for a total of $k+2$ elements.\footnote{The index $-1$ is not necessary for convergence when indices $0$ through $k$ are in the bundle. However, since our convergence analysis focused on the aggregate subgradient, we keep index $-1$ in every bundle variant.} In the active bundle variant, we keep the indices $-1,0,k,$ and add in any indices $i$ that satisfy
$$\phi_k^\varepsilon(x_{k+1})=\phi_k^\varepsilon(x_i)+g_i^{\varepsilon\top}(x_{k+1}-x_i).$$
These are the linear functions that are active at iteration $k-1.$ Finally, the almost active bundle keeps the indices $-1,0,k,$ and adds in any indices that satisfy
$$\phi_k^\varepsilon(x_{k+1})<\phi_k^\varepsilon(x_i)+g_i^{\varepsilon\top}(x_{k+1}-x_i)+10^{-6}.$$
These are the linear functions that are `almost' active at iteration $k-1,$ which allows for software rounding errors to be discounted.

\subsection{Max-of-quadratics Tests}\label{moqt}

For our first tests, we use a max-of-quadratics generator to create problems. Each problem to be solved is a randomly generated function $f(x):=\max\{q_1(x),q_2(x),...,q_m(x)\},$ where $q_i$ is convex quadratic for all $i.$  There are four inputs to the generator function: $n, nf, nf_{x^*},$ and $nf_z.$ The number $n$ is the dimension of $x,$ $nf$ is the number of quadratic functions used, $nf_{x^*}$ is the number of quadratic functions that are active at the true proximal point, and $nf_z$ is the number of quadratic functions that are active at $z.$ These features can all be controlled, as seen in \cite{compprox}. The approximate subgradient is constructed by finding the gradient of the first active quadratic function, and giving it some random error of magnitude less than $\varepsilon$ by using the {\tt randsphere} routine.\footnote[1]{Randsphere is a MATLAB function that outputs a uniformly distributed set of random points in the interior of an $n$-dimensional hypersphere of radius $r$ with center at the origin. The code is found at \url{http://www.mathworks.com/matlabcentral/fileexchange/9443-random-points-in-an-n-dimensional-hypersphere/content/randsphere.m.}} That is,
$$\tilde{g}_k^\varepsilon=\nabla f_i(x)+\varepsilon\randsphere(1,n,1),$$
where $i$ is the first active index. Though we use random error, the random function is seeded, so that the results are reproducible. The primal quadratic program is solved using MATLAB's {\tt quadprog} solver.
\par Two sets of problems were generated: low-dimension trials and high-dimensions trials.  For the low-dimension trials the Hessians of the $q_i$ functions were dense and we attempted to solve ten problems at each possible state of \small$$n\in\{4, 10, 25\},~nf\in\left\{1,\left\lceil\frac{n}{3}\right\rceil,\left\lceil\frac{2n}{3}\right\rceil,n\right\},~nf_{x^*}\in\left\{1,\left\lceil\frac{n}{3}\right\rceil,\left\lceil\frac{2n}{3}\right\rceil,n\right\},~nf_z\in\left\{1,\left\lceil\frac{n}{3}\right\rceil,\left\lceil\frac{2n}{3}\right\rceil,n\right\},$$\normalsize with four variants of the algorithm and three subgradient error levels: $\varepsilon\in\{0,\STOPTOL,10\STOPTOL\}.$ This amounts to a total of 2700 problems\footnote[2]{These totals take into account that $nf_z$ and $nf_{x^*}$ cannot exceed $nf$ at any stage. When $n=4,$ we have fewer possibilities due to the low values of $\lceil\frac{n}{3}\rceil$ and $\lceil\frac{2n}{3}\rceil.$} attempted by each of the four variants, 10,800 runs altogether. For the high-dimension trials the Hessians of the $q_i$ functions were sparse, with $95\%$ density of zeros.  In high dimension, we attempted two problems at each possible state of $n\in\{100,200\}$ and the same conditions as above on the rest of the variables, for another 360 problems attempted by each variant, 1440 runs altogether.
\par The performance of the variants is presented in two performance profile graphs and a table of averages. The table provides average CPU times, average number of iterations, and average number of tilt-corrections for each of the four bundle variants. One performance profile graph is for low dimension $n=4,10,25,$ and the other is for high dimension $n=100,200.$ A performance profile is a standard method of comparing algorithms, using the best-performing algorithm for each category of test as the basis for comparison. Here, we compare the four variants based on CPU time used, and on number of iterations used, to solve each problem. We set $\STOPTOL=10^{-3}$ for all tests, and declare a problem as solved if the stopping criterion is triggered within $100n$ iterations for the low-dimension set, and $20n$ for the high-dimension set. The $x$-axis, on a natural logarithmic scale, is the factor $\tau$ of the best possible ratio, and on the $y$-axis is the percentage of problems solved within a factor of $\tau$ of the solve time of the best solve time for a given function.
\par In the low-dimension case, we see from Figure 1 that the $(k+2)$-bundle is the most efficient, and the almost active bundle follows close behind. The $3$-bundle and active bundle coincide almost exactly; their curves overlap. Comparing the results for each error level in terms of CPU time vs. number of iterations (each pair of side-by-side graphs), there is no notable difference. With the $3$-bundle and active bundle, about 22\% of the problems timed out, meaning that the upper bound of $100n$ iterations was reached before the stopping condition was triggered. With the almost active bundle, that figure drops to about 10\%, and the $(k+2)$-bundle solved all of the problems. However, it is interesting to note that about 95\% of those timed-out problems still ended with $\dist(x_k,x^*)\leq\STOPTOL+\varepsilon/r.$ While this does not contradict the theory within this paper, it suggests that the stopping test may be more difficult to satisfy than is desired. Future research should explore alternative stopping tests.\par In the high-dimension case, Figure 2 tells us that the $3$-bundle and active bundle perform well in terms of CPU time for the problems they solved, however they were only able to solve about two thirds of the problems within the allotted time limit. The $(k+2)$-bundle took more time, and the almost active bundle much more still, but the former solved all the problems and the latter solved 97\% of them. In terms of number of iterations, the same general pattern as was found in the low-dimension case appears. The only difference is that the almost active bundle uses noticeably more iterations to complete the job, but the curve still lies below that of the $(k+2)$-bundle and above the other two. The average CPU time, number of iterations, and number of tilt-corrections for both sets of problems are displayed in Table \ref{table1}. It is interesting to note that for this type of problem the tilt-correction was used sparingly in low dimension, and in high dimension it was not needed at all. Our feeling is that this is due to the way we chose to approximate subgradients; this will be made clear in the next sets of numerical tests below. Also, it needs to be mentioned that our software applied {\tt quadprog} to solve the quadratic sub-problems.  While {\tt quadprog} is readily available, it is not recognized as among the top-quality quadratic program solvers.  Testing the algorithm using different solvers might produce different results. However, it is also possible that the inexactness of the subgradients might override any improvement in solver quality.
\begin{figure}[H]\begin{center}
\includegraphics[scale=0.55]{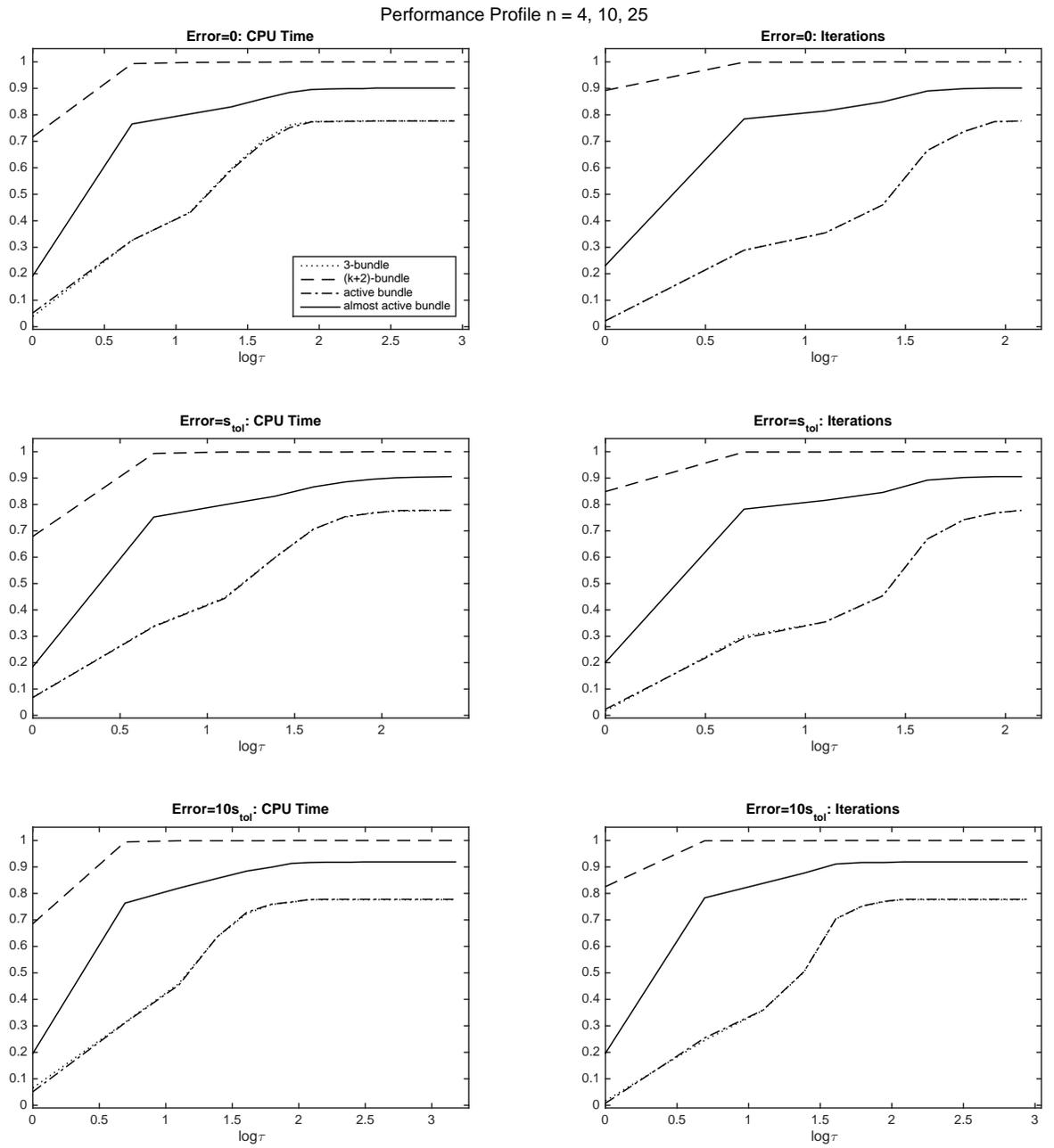}
\label{nsmalllog}
\caption{Low-dimension performance profile.}\end{center}
\end{figure}

\begin{figure}[H]\begin{center}
\includegraphics[scale=0.485]{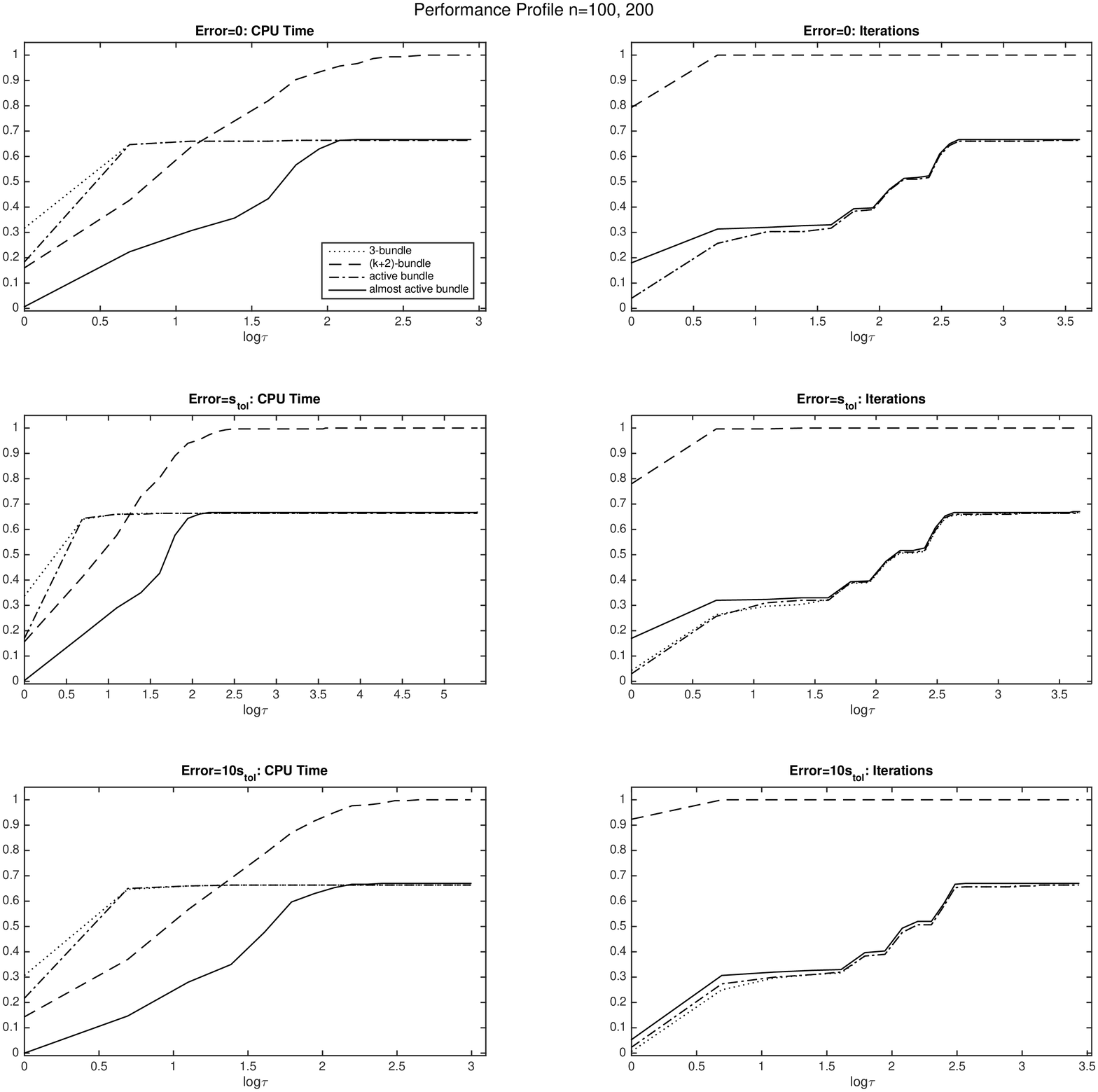}
\label{nlargelog}
\caption{High-dimension performance profile.}\end{center}
\end{figure}

\begin{table}[H]
\begin{center}
\begin{tabular}{|l|l|l|l|}
\hline
\textbf{Bundle}&\textbf{Average CPU Time}&\textbf{Average Iterations}&\textbf{Average Tilt-corrections}\\
\hline
$3$-bundle&$n$ low: 1.36s&$n$ low: 191&$n$ low: 0.0122\\
&$n$ high: 19.90s&$n$ high: 2034&$n$ high: 0\\
\hline
$(k+2)$-bundle&$n$ low: 0.61s&$n$ low: 45&$n$ low: 0.0015\\
&$n$ high: 66.31s&$n$ high: 250&$n$ high: 0\\
\hline
Active bundle&$n$ low: 1.39s&$n$ low: 191&$n$ low: 0.0111\\
&$n$ high: 19.90s&$n$ high: 2035&$n$ high: 0\\
\hline
Almost active&$n$ low: 1.96s&$n$ low: 109&$n$ low: 0.003\\
&$n$ high: 531.06s&$n$ high: 1438&$n$ high: 0\\
\hline
\end{tabular}
\end{center}
\caption{Average values among the four bundle variants.}\label{table1}
\end{table}

 %%%
\subsection{Derivative-free optimization tests}
%%%

To see how the algorithm might perform in the setting of derivative-free optimization, we selected a test set of ten functions and ran the algorithm using the simplex gradient method developed in the robust approximate gradient sampling algorithm \cite{HareNutini13}. The robust approximate gradient sampling algorithm approximates a subgradient of the objective function by using convex combinations of linear interpolation approximate subgradients (details can be found in \cite{HareNutini13}). Most of the problems are taken from \cite{lukvsan2000test}, some of which were adjusted slightly to make them convex. The adjustments made and other details on these test functions appear in Appendix \ref{appendix}.   A brief description of the functions is found in Table \ref{table2}; more details are available from the authors and from \cite{lukvsan2000test}.

\begin{table}[H]
\begin{center}
\begin{tabular}{|l|l|l|l|}
\hline
\textbf{Function}&\textbf{Dimension}&\textbf{Description}&\textbf{Reference}\\
\hline
P alpha&2&max of 2 quadratics&\\
\hline
DEM&2&max of 3 quadratics&\cite[Problem 3.5]{lukvsan2000test}\\
\hline
Wong 3 (adjusted)&20&max of 18 quadratics&\cite[Problem 2.21]{lukvsan2000test}\\
\hline
CB 2&2&max of exponential, quartic, quadratic&\cite[Problem 2.1]{lukvsan2000test}\\
\hline
Mifflin 2&2&absolute value + quadratic&\cite[Problem 3.9]{lukvsan2000test}\\
\hline
EVD 52 (adjusted)&3&max of quartic, quadratics, linears&\cite[Problem 2.4]{lukvsan2000test}\\
\hline
OET 6 (adjusted)&2&max of exponentials&\cite[Problem 2.12]{lukvsan2000test}\\
\hline
MaxExp&12&max of exponentials&\\
\hline
MaxLog&30&max of negative logarithms&\\
\hline
Max10&10&max of 10\textsuperscript{th}-degree polynomials&\\
\hline
\end{tabular}
\end{center}
\caption{Set of test problems using simplex gradients.}\label{table2}
\end{table}

Each bundle variant was run 100 times on each test problem.  In all cases, the algorithm located the correct proximal point.  The average CPU time, average number of iterations, and average number of tilt-correct steps used appear in Table \ref{table2results}.

\begin{table}[H]
\begin{center}
\begin{tabular}{|l|l|l|l|}
\hline
\textbf{Bundle}&\textbf{Average CPU Time}&\textbf{Average Iterations}&\textbf{Average Tilt-corrections}\\
\hline
$3$-bundle&0.85s&147&1.01\\
\hline
$(k+2)$-bundle&1.17s&45&1.18\\
\hline
Active bundle&0.84s&146&0.93\\
\hline
Almost active&1.18s&59&0.79\\
\hline
\end{tabular}
\end{center}
\caption{Average values among the four bundle variants.}\label{table2results}
\end{table}

As in the previous test set, in terms of iterations, the $(k+2)$-bundle and the almost active variants greatly outperform the $3$-bundle and active bundle variants.  In terms of CPU time, all methods used approximately 1 second per problem.  However, unlike the first test set, this test set required an average of 1 tilt-correct step per problem. Since these latest averages were taken over varying types of functions instead of just max-of-quadratics, it could be that the tilt-correct step is less often necessary for an objective function that is not a max-of-quadratic. Or it could be that using simplex gradients, instead of finding a true subgradient and giving it some random error via the {\tt randsphere} function, requires heavier use of the tilt-correct step. This issue inspired the next set of tests below.

%%%
\subsection{Simplex gradient vs. randsphere tests}
%%%

The next set of data takes the first 2400 trials of the low-dimension case and solves the same problems using the aforementioned simplex gradient method of \cite{HareNutini13}. We compare these results with the previously obtained {\tt randsphere} method by way of the performance profile in Figure \ref{nlowsimplex} and the average values of Table \ref{tablesimp}.

\begin{figure}[H]\begin{center}
\includegraphics[scale=0.485]{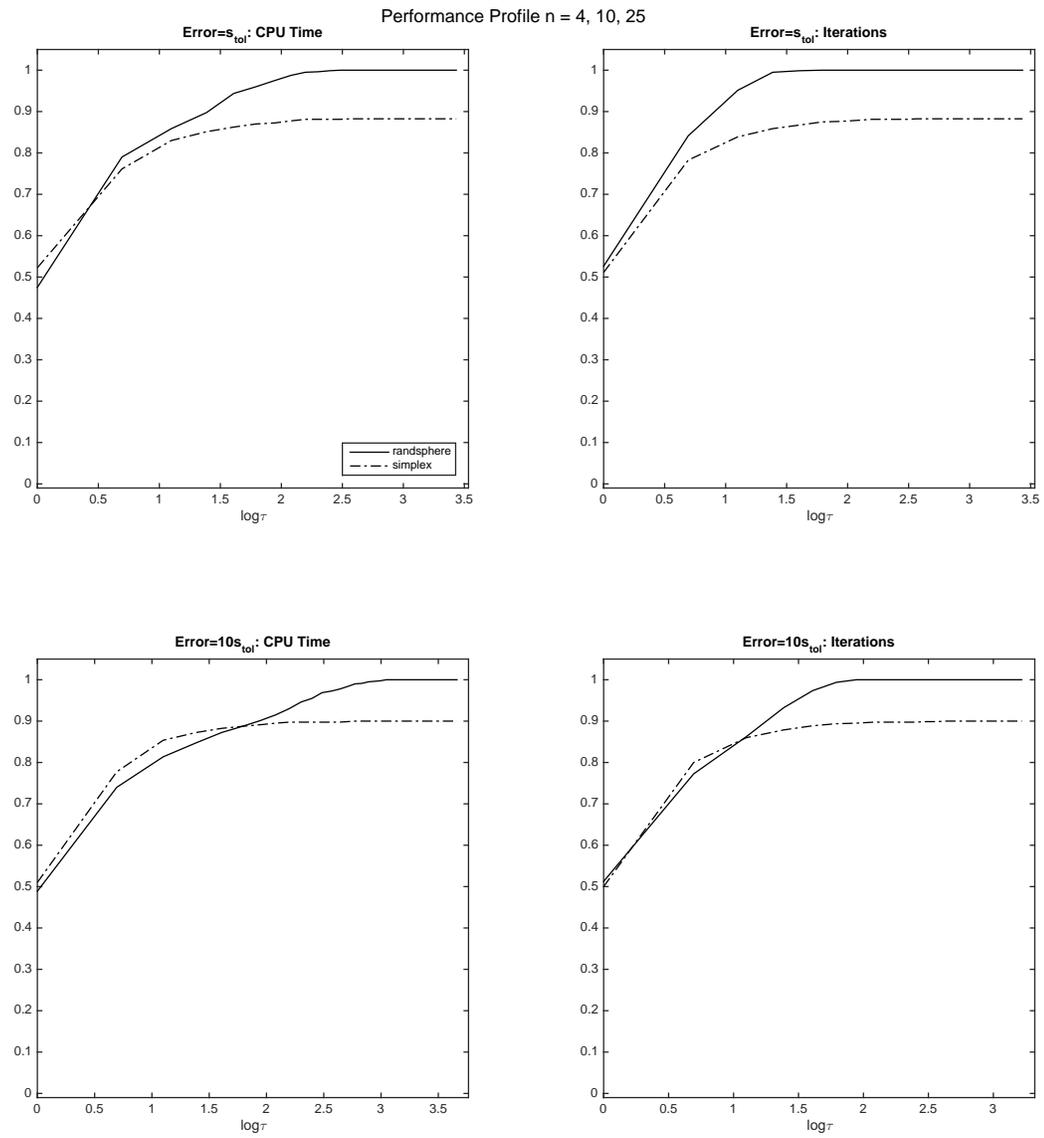}
\label{nlowsimplex}
\caption{Low-dimension performance profile -- {\tt randsphere} vs. simplex gradient.}\end{center}
\end{figure}

\begin{table}[H]
\begin{center}
\begin{tabular}{|l|l|l|l|}
\hline
\textbf{Subgradient Method}&\textbf{Average CPU Time}&\textbf{Average Iterations}&\textbf{Average Tilt-corrections}\\
\hline
{\tt randsphere}&0.6606&49.9050&0.0040\\
Simplex&0.8034&44.6333&12.2750\\
\hline
\end{tabular}
\end{center}
\caption{Average values for two methods of approximating subgradients.}\label{tablesimp}
\end{table}
 There is not a very noticeable difference in the performance profile graph, except that when using the {\tt randsphere} method all the problems are solved, whereas about 10\% of problems timed out when the simplex gradient method was used. As in Section \ref{moqt}, a problem times out if more than $100n$ iterations are required to trigger the stopping condition. The two curves start out in the same place and increase at about the same rate. The table reflects that; both the average CPU time and the average number of iterations have negligible differences between the two methods. However, we do notice a large difference in the average number of tilt-corrections used. As mentioned in the discussion of the previous data set, the tilt-correct step is almost never implemented when using {\tt randsphere} to give a true subgradient some error. However, when the simplex gradient method is used there is an average of 12 tilt-corrections performed per problem solved. This suggests that in a true DFO setting, the tilt-correct step will be utilized much more often.

%%%%%
\section{Conclusion}\label{sec:conc}
%%%%%

We have presented an approximate subgradient method for numerically finding the proximal point of a convex function at a given point. The method assumes the availability of an oracle that delivers an exact function value and a subgradient that is within distance $\varepsilon$ of the true subdifferential, but does not depend on the approximate subgradient being an $\varepsilon$-subgradient, nor on the model function being a cutting-planes function (one that bounds the objective function from below). The method is proved to converge to the true prox-point within $\STOPTOL+\varepsilon/r,$ where $\STOPTOL$ is the stopping tolerance of the algorithm, $\varepsilon$ is the bound on the approximate subgradient error, and $r$ is the prox-parameter.
\par From a theoretical standpoint, two questions immediately present themselves.  First, could the method be extended to work for nonconvex functions? Second, could the method be extended to work in situations where the function value is inexact?  Some of the techniques in this paper were inspired by \cite{compprox,Hareetal16}, which suggests that the answer to both questions could be positive.  However, the extensions are not as straightforward as they may first appear.  The key difficulty in extending this algorithm in either of these directions is that, when multiple potential sources of error are present, it is difficult to determine the best course of action.  For example, suppose $f_k+\tilde{g}_k^{\varepsilon\top}(z-x_k)-f(z)>0.$ Then, by convexity, we know the inexactness of the subgradient is at fault and we perform a tilt-correct step.  However, if the function is nonconvex, then the above inequality could occur due to nonconvexity or due to the inexactness of the subgradient.  If the error is due to inexactness, then a tilt-correct step is still the right thing to do. If, on the other hand, the error is due to nonconvexity, then it might be better to redistribute the prox-parameter as suggested in \cite{compprox}.  These issues are equally complex if inexact function values are allowed, and even more complex if both nonconvex functions and inexact function values are permitted.
\par Another obvious theoretical question would be what happens if $\varepsilon_k$ asymptotically tends to $0$?  Would the algorithm converge to the exact proximal point?  It is likely that, because past information is preserved in the aggregate subgradient, allowing $\varepsilon_k\rightarrow0$ inside this routine will not result in an asymptotically exact algorithm.  However, this is not a concern, as the purpose of this algorithm is to be used as a subroutine inside a minimization algorithm, where $\varepsilon_k$ can be made to tend to zero outside the proximal routine. This has the effect of resetting the routine with ever smaller values of $\varepsilon,$ which yields asymptotic exactness.
\par One may also wonder what the effect of changing the prox-parameter as the algorithm runs would have on the results and the speed of convergence. In \cite{dynamical}, the authors outline a method for dynamically choosing the optimal prox-parameter at each iteration by solving an additional minimization problem and incurring negligible extra cost, with encouraging numerical results. In future work, that method could be incorporated into this algorithm to see if the runtime improves.
\par From a numerical standpoint, we found that when subgradient errors are present, it is best to keep all the bundle information and use it at every iteration. The other bundle variants also solve the problems, but clearly not as robustly as the biggest bundle does.\par There are many more numerical questions that could be investigated. For example, error bounds/accuracy on the results could be analyzed, and as mentioned above, the effect of a dynamic prox-parameter could be investigated. The more immediate goal is to examine this new method in light of the $\VU$-algorithm \cite{vualg}, which alternates between a proximal step and a quasi-Newton step to minimize nonsmooth convex functions. The results of this paper theoretically confirm that the proximal step will work in the setting of inexact subgradient evaluations.  Combined with \cite{Hare2014}, which theoretically examines the quasi-Newton step, the development of a $\VU$-algorithm for inexact subgradients appears achievable.

\subsection*{Acknowledgements} 

We are grateful for the excellent feedback from two anonymous referees that helped improve this manuscript. 

\appendix

%%%%%
\section{Test Problem Details}\label{appendix}
%%%%%

Three of the test functions taken from \cite{lukvsan2000test} were adjusted to make them convex. Four other test functions did not come from \cite{lukvsan2000test}. Those details are the following. If a function does not appear in this list, it is unchanged from \cite{lukvsan2000test}.
\begin{itemize}
\item[(i)] \emph{P alpha.} This function is defined $P_\alpha:\R^2\rightarrow\R,$
$$P_\alpha(x):=\max\{x_1^2+\alpha x_2,x_1^2-\alpha x_2\}.$$
For this test, $\alpha$ was set to 10,000.
\item[(ii)] \emph{Wong 3 adjusted.} From Wong 3 \cite[Problem 2.21]{lukvsan2000test}, the term $x_1x_2$ was removed from $f_1,$ and the term $-2x_1x_2$ was removed from $f_5.$ All other sub-functions remained the same.
\item[(iii)] \emph{EVD 52 adjusted.} From EVD 52 \cite[Problem 2.4]{lukvsan2000test}, the term $2x_1^3$ in $f_5$ was changed to $2x_1^4.$ All other functions remained the same.
\item[(iv)] \emph{OET 6 adjusted.} From OET 6 \cite[Problem 2.12]{lukvsan2000test}, the term $x_1e^{x_3t_i}$ was changed to $x_1+e^{x_3t_i},$ and the term $x_2e^{x_4t_i}$ was changed to $x_2+e^{x_4t_i}$ for all $i.$
\item[(v)] \emph{MaxExp.} This function is defined $f:\R^{12}\rightarrow\R,$
$$f(x)=\max \limits_{i\in\{1,2,\ldots,12\}}\left\{ie^{x_i}\right\}.$$
\item[(vi)] \emph{MaxLog.} This function is defined $f:\R^{30}_+\rightarrow\R,$
$$f(x)=\max \limits_{i\in\{1,2,\ldots,30\}}\left\{-i\ln x_i\right\}.$$
\item[(vii)] \emph{Max10.} This function is defined $f:\R^{10}\rightarrow\R,$
$$f(x)=\max \limits_{i\in\{1,2,\ldots,10\}}\left\{ix_i^{10}\right\}.$$
\end{itemize}

\def\cprime{$'$}

\end{document}